\documentclass[12pt, english, a4paper]{article}
\usepackage[latin9]{inputenc}
\usepackage[T1]{fontenc}
\usepackage{graphicx}
\usepackage{amsthm}
\usepackage{amsfonts}
\usepackage{amsmath}
\usepackage{amssymb}
\usepackage{makecell}
\usepackage{cases}
\usepackage[mathscr]{eucal}
\usepackage{fullpage}
\usepackage{times}
\usepackage[usenames]{color}
\usepackage[dvipsnames]{xcolor}
\usepackage{hyperref}

\newcommand{\SSS}{\mathfrak{S}}

\newcommand{\stir}{ \mathcal{Q}}
\newcommand{\seqnum}[1]{\href{https://oeis.org/#1}{\rm \underline{#1}}}

\DeclareMathOperator{\flt}{flat}
\DeclareMathOperator{\maj}{maj}

\DeclareMathOperator{\des}{des}
\DeclareMathOperator{\inv}{inv}
\DeclareMathOperator{\exc}{exc}
\DeclareMathOperator{\run}{run}

\theoremstyle{plain}
\newtheorem{theorem}{Theorem}
\newtheorem{corollary}[theorem]{Corollary}
\newtheorem{lemma}[theorem]{Lemma}
\newtheorem{proposition}[theorem]{Proposition}
\theoremstyle{definition}
\newtheorem{definition}[theorem]{Definition}
\newtheorem{example}[theorem]{Example}
\newtheorem{conjecture}[theorem]{Conjecture}
\theoremstyle{remark}
\newtheorem{remark}[theorem]{Remark}

\usepackage{authblk}
\title{Enumeration of flattened $k$-Stirling permutations with respect to descents}
\author[1]{Umesh Shankar\thanks{\tt{204093001@iitb.ac.in,umeshshankar@outlook.com}}} 
\affil[1]{Department of Mathematics, Indian Institute of Technology, Bombay Mumbai 400076, India} 
\date{July, 2023}
\begin{document}
\maketitle
\begin{abstract}
    A $k$-Stirling permutation of order $n$ is said to be "flattened" if the leading terms of its increasing runs are in ascending order. We show that flattened $k$-Stirling permutations of order $n+1$ are in bijective correspondence with a colored variant of type $B$ set partitions of $[-n,n]$. Using the theory of weighted labelled structures, we give the exponential generating function of their descent enumerating polynomials. We also provide enumerative formulae for the number of flattened $k$-Stirling permutations of order $n$ with small number of descents and the number of flattened $2$-Stirling permutations with maximum number of descents.    
\end{abstract}
\textbf{\small{}Keywords:}{\small{} flattened partitions, $k$-Stirling permutations, colored set partition, type $B$ set partitions, exponential generating function, enumeration}{\let\thefootnote\relax\footnotetext{The author is supported by the National Board for Higher Mathematics, India.}}{\let\thefootnote\relax\footnotetext{2020 \textit{Mathematics Subject Classification}. Primary 05A15; Secondary 05A05, 05A19.}}

\section{Introduction}
For a positive integer $n$, let $[n]$ be the set $\lbrace 1,2,\dots,n \rbrace$ and $\SSS_n$ be the set of permutations of $[n]$. The study of permutation statistics has been around for more than a century with most experts attributing its inception to the work \cite{Macmajor} of MacMahon who studied the four statistics for a permutation $\pi$, the number of descents $(\des(\pi))$, the number of excedances $(\exc(\pi))$, the major index $(\maj(\pi))$ and the number of inversions $(\inv(\pi))$. As we use descents in this work, we define them.  For a permutation $\pi=\pi_1\pi_2\dots\pi_n$, a descent of $\pi$ is an index $i\in [n-1]$ 
such that $\pi_i>\pi_{i+1}$.

The classical Eulerian polynomial $A_n(t)$ is the polynomial that enumerates descents over $\SSS_n$. It has been extensively studied in \cite{foata-schutzenberger-eulerian,foata-desarm_loday,branden-actions_on_perms_unimodality_descents,foata-strehl-actiontansec,foata-strehl} and is shown to have a great many remarkable properties. The book \cite{petersen-eulerian-nos-book} by Petersen is a good reference on this subject. One of these interesting properties is the Carlitz identity.
\begin{theorem}[Carlitz Identity] For any $n\ge 0$ and $A_n(t)$, the Eulerian polynomial,
    \begin{equation}
        \frac{tA_n(t)}{(1-t)^{n+1}}=\displaystyle \sum_{r\ge 1} r^nt^r.
    \end{equation}
\end{theorem}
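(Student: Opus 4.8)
The plan is to prove the identity combinatorially by interpreting both sides as counting the same family of objects and then summing a generating function. The right-hand side is transparent: for a fixed $r$, the coefficient $r^n$ counts words $w = w_1 w_2 \cdots w_n$ with each letter $w_i \in [r]$, equivalently functions $[n] \to [r]$. My goal is to partition these words according to an associated permutation so that the Eulerian polynomial emerges.

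First I would set up the sorting bijection. Given a word $w$, let $\pi = \pi_1 \cdots \pi_n \in \SSS_n$ be the unique permutation that stably sorts the letters, so that $w_{\pi_1} \le w_{\pi_2} \le \cdots \le w_{\pi_n}$, with ties broken by the rule that $w_{\pi_i} = w_{\pi_{i+1}}$ forces $\pi_i < \pi_{i+1}$. Writing $s_i := w_{\pi_i}$, the word $w$ is encoded by the pair $(\pi, s)$, where $s_1 \le \cdots \le s_n$ is a weakly increasing sequence in $[r]$. The essential feature of the tie-breaking convention is that a descent $\pi_i > \pi_{i+1}$ of $\pi$ forces a strict increase $s_i < s_{i+1}$, and conversely every pair $(\pi, s)$ with this compatibility property arises from a unique $w$. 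I expect verifying this bijection --- in particular, pinning down the tie-breaking so that descents correspond exactly to forced strict ascents --- to be the main obstacle, since it is precisely where indexing errors tend to creep in.

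Next I would count, for a fixed $\pi$ with $d = \des(\pi)$, the number of admissible sequences $s$. Subtracting from each $s_i$ the number of descents of $\pi$ occurring before position $i$ converts the $d$ forced strict inequalities into weak ones, turning $s$ into an arbitrary weakly increasing sequence of length $n$ with values in $[r-d]$. There are $\binom{r - d + n - 1}{n}$ of these, which yields the Worpitzky-type identity
\[
    r^n = \sum_{d \ge 0} A(n,d) \binom{r - d + n - 1}{n},
\]
where $A(n,d)$ denotes the number of permutations in $\SSS_n$ with $d$ descents.

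Finally I would multiply by $t^r$, sum over $r \ge 1$, and interchange the order of summation. Reindexing by $m = r - d - 1$ and applying the negative-binomial expansion $\sum_{m \ge 0} \binom{m+n}{n} t^m = (1-t)^{-(n+1)}$ collapses the inner sum and leaves a factor of $t^{d+1}$, so the outer sum reassembles $t \sum_{d} A(n,d) t^d = t A_n(t)$. This produces exactly $\dfrac{t A_n(t)}{(1-t)^{n+1}}$, completing the argument. A purely algebraic alternative would be to recognize $\sum_{r \ge 1} r^n t^r$ as $\bigl(t \tfrac{d}{dt}\bigr)^n \tfrac{1}{1-t}$ and to establish the resulting operator identity by induction on $n$, but the combinatorial route above is more transparent and directly explains the appearance of $A_n(t)$.
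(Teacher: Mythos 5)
Your proposal is correct. Note that the paper itself offers no proof of this statement: the Carlitz identity appears there only as classical background in the introduction (with the surrounding literature cited, e.g.\ Petersen's book), so there is no internal argument to compare yours against. What you give is the standard combinatorial proof: the stable-sorting bijection between words $w \colon [n] \to [r]$ and compatible pairs $(\pi, s)$, which yields the Worpitzky-type identity
\[
    r^n \;=\; \sum_{d \ge 0} A(n,d)\binom{r-d+n-1}{n},
\]
followed by summation against $t^r$ and the expansion $\sum_{m \ge 0} \binom{m+n}{n}t^m = (1-t)^{-(n+1)}$. All three steps check out, including the tie-breaking convention (a descent $\pi_i > \pi_{i+1}$ forces $s_i < s_{i+1}$, and conversely equality of $s_i, s_{i+1}$ forces $\pi_i < \pi_{i+1}$, so the pair determines and is determined by $w$) and the shift $s_i \mapsto s_i - \#\{j \in \DES(\pi) : j < i\}$ landing in $[r-d]$. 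The only point you should make explicit when writing this up: in the reindexing $m = r-d-1$, the terms with $1 \le r \le d$ must be discarded, which is legitimate because then $0 \le r-d+n-1 < n$ and hence $\binom{r-d+n-1}{n} = 0$; without that remark the interchange of summation silently drops terms. Your closing observation that one could instead verify $\sum_{r\ge 1} r^n t^r = \bigl(t\frac{d}{dt}\bigr)^n \frac{1}{1-t}$ by induction is also a valid alternative, though as you say it explains the appearance of $A_n(t)$ less directly.
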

The set of Stirling permutations of order $n$ is the set of permutations on the multiset $\lbrace 1,1,2,2,\dots, n,n \rbrace$ that satisfy the additional property that, for all $1\le i\le n$, the values between the occurrences of $i$ are all larger than $i$. Gessel and Stanley, in \cite{gessel-stan-stirling}, proved that the descent enumerating polynomial $Q_n(t)$ over the Stirling permutations satisfies the following analogue of the Carlitz identity.
\begin{theorem}[Gessel, Stanley] For any $n\ge 0$,
    \begin{equation}
        \frac{tQ_n(t)}{(1-t)^{2n+1}}=\displaystyle \sum_{r\ge 0} S(r+n,r)t^r
    \end{equation}
    where $S(a,b)$ is the Stirling numbers of the second kind with $S(a,0)=0$ for $a\ge 0$.
\end{theorem}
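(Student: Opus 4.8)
The plan is to prove the identity by induction on $n$, by showing that the two sides satisfy the same first-order recurrence and the same initial value. Throughout write $\mathcal{P}_n(t) = \sum_{r \ge 0} S(r+n, r)\, t^r$ for the right-hand side, and $Q_n(t) = \sum_{k \ge 0} q_{n,k}\, t^k$, where $q_{n,k}$ is the number of Stirling permutations of order $n$ with exactly $k$ descents.

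First I would derive a recurrence for $\mathcal{P}_n$ purely from the Stirling numbers. Substituting $a = n+r$, $b = r$ into $S(a,b) = b\,S(a-1,b) + S(a-1,b-1)$ gives $S(n+r,r) = r\,S((n-1)+r,\,r) + S(n+(r-1),\,r-1)$; multiplying by $t^r$ and summing over $r \ge 1$ (the term $r=0$ drops out since $S(n,0)=0$ for $n\ge 1$) sends the first term to $t\,\mathcal{P}_{n-1}'(t)$ and the index shift to $t\,\mathcal{P}_n(t)$, yielding
\[
  (1-t)\,\mathcal{P}_n(t) = t\,\mathcal{P}_{n-1}'(t), \qquad \mathcal{P}_0(t) = \frac{t}{1-t},
\]
where the initial value uses the paper's convention $S(0,0)=0$.

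Next I would derive a matching recurrence for $Q_n$ combinatorially. The structural fact is that every Stirling permutation of order $n$ arises uniquely from one of order $n-1$ by inserting the block $n\,n$ into one of its $2n-1$ gaps: the two copies of the largest letter $n$ must be adjacent, and being largest they never break the Stirling condition. The main work is to track how each insertion changes $\des$. A case analysis on the gap — internal descent, internal ascent/level, or either boundary — shows that inserting into a descent gap or at the right end leaves $\des$ unchanged, while inserting into an ascent/level gap or at the left end raises $\des$ by one. For a permutation with $d$ descents there are $d+1$ gaps of the first kind and $2n-2-d$ of the second, so
\[
  q_{n,k} = (k+1)\,q_{n-1,k} + (2n-1-k)\,q_{n-1,k-1},
\]
which, using $\sum_k (k+1)q_{n-1,k}t^k = (tQ_{n-1})'$ and $\sum_k (2n-1-k)q_{n-1,k-1}t^k = t\big((2n-2)Q_{n-1} - tQ_{n-1}'\big)$, becomes
\[
  Q_n(t) = \big(1+(2n-2)t\big)Q_{n-1}(t) + t(1-t)Q_{n-1}'(t), \qquad Q_0(t)=1.
\]

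Finally I would close the induction: assuming $\mathcal{P}_{n-1}(t) = tQ_{n-1}(t)/(1-t)^{2n-1}$, differentiating and substituting into $(1-t)\mathcal{P}_n = t\mathcal{P}_{n-1}'$ shows that $\mathcal{P}_n(t) = tQ_n(t)/(1-t)^{2n+1}$ holds exactly because $Q_n$ satisfies the recurrence just proved, while the base case reduces to $t/(1-t)=t\cdot 1/(1-t)$. I expect the descent-tracking in the insertion step to be the one genuine obstacle: the algebra linking the two recurrences is routine, but one must treat the boundary gaps and the equal pair $n\,n$ carefully, as precisely these edge cases produce the asymmetric coefficients $(k+1)$ and $(2n-1-k)$. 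As an alternative, one could extract the coefficient of $t^r$ on the left to reduce the claim to the Worpitzky-type identity $S(n+r,r) = \sum_\pi \binom{r+2n-1-\des\pi}{2n}$, reading each binomial coefficient as the number of weakly increasing $[r]$-valued sequences compatible with $\pi$; but turning such pairs into set partitions of $[n+r]$ bijectively is more delicate than the recurrence argument above.
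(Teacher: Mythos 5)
Your proof is correct, but there is no ``paper's own proof'' to compare it against: the paper never proves this statement. It is quoted in the introduction as background (attributed to Gessel and Stanley), and nothing later in the paper depends on reproving it. Judged on its own, your argument is sound and is essentially the classical recurrence route for this identity. The delicate points all check out: (i) the Stirling recurrence $S(n+r,r)=rS(n-1+r,r)+S(n+(r-1),r-1)$ is only ever invoked with top argument $n+r\ge 2$, so the paper's convention $S(0,0)=0$ causes no trouble --- indeed it is exactly what makes your base case $\mathcal{P}_0(t)=t/(1-t)=tQ_0(t)/(1-t)$ come out; (ii) the insertion bijection is legitimate since the two copies of $n$ must be adjacent, and your descent bookkeeping is right: a descent gap trades $\pi_i>\pi_{i+1}$ for $n>\pi_{i+1}$ (net $0$), an ascent/plateau gap or the left end gains one descent, the right end gains none, giving $d+1$ neutral and $2n-2-d$ augmenting gaps and hence $q_{n,k}=(k+1)q_{n-1,k}+(2n-1-k)q_{n-1,k-1}$; (iii) differentiating the inductive hypothesis reduces the induction step to precisely that recurrence. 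This recurrence-plus-insertion argument is in substance how Gessel and Stanley established the result originally, so you have reconstructed the literature's proof rather than diverged from it.
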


They also defined a very natural generalization of Stirling permutations. Define $j^k$ to be $\underbrace{jj\dots j}_{\text{$k$ copies}}$ for $j,k \in \mathbb N$ . We call a permutation of the multiset $\lbrace 1^k,2^k,\dots,n^k\rbrace $ (we have $k$ copies of each letter) a $k$-Stirling permutation of order $n$ if for each $i$, $1 \le i \le n$, all entries between two consecutive occurrences of $i$ are greater than $i$. We shall denote this set of $k$-Stirling permutations of order $n$ by $\stir_n^k$. By this definition, the set $\stir^1_n=\SSS_n$ and $\stir_n^2$ is the set of Stirling permutations. 

Callan in \cite{callan-flatten} defined the notion of flattening a set partition into a permutation by erasing the dividers between its blocks. Since the order depends on the listing of the blocks and internal arrangement of letters of the blocks, the letters are written in increasing order inside each block and the blocks are arranged from left to right in increasing order of minimum element. Therefore, a \textit{flattened partition} of $n$ is a permutation in $\SSS_n$ that is obtained by flattening a set partition of $[n]$.  Mansour in \cite{mansour-flatten} also uses the same notion.

For a permutation $\pi \in \SSS_n$, a \textit{run} is a maximal contiguous increasing or decreasing subword of $\pi$. However, in this work, runs are synonymous with increasing runs. It should be noted that enumeration of various sets of permutations by number of runs is abundant in the literature of this subject as can be seen in \cite{maaltrun,bonaaltrun,canfieldwilfalternatingrun,Andrealtrun}.

While reading from left to right, if the sequence of leading letters of the runs are in increasing order, we get a flattened partition of $n$. Nabawanda, Rakotondrajao, and Bamunoba, in \cite{nabawanda-flatten}, 
 showed that the set of flattened partitions of $n$ are in bijection with set partitions of $[n-1]$. Furthermore, they also proved a recurrence that establishes a generating function for the run-enumerating polynomial over the flattened partitions.

 For $w \in \stir_n^k$, a $k$-Stirling permutation of order $n$, Buck et al., in \cite{buck2023flattened}, define a \textit{run} of $w$ to be a maximal contiguous weakly increasing subword of $w$. Similarly, if the leading terms of the runs of $w$ are in weakly increasing order from left to right, they call such a $k$-Stirling permutation a \textit{flattened $k$-Stirling permutation}. We denote the set of flattened $k$-Stirling permutations of order $n$ by $\flt(\stir_n^k)$. Buck et al., studied the flattened $2$-Stirling permutations and showed that the flattened $2$-Stirling permutations of order $[n+1]$ are in bijection with the type $B$ set partitions of $[-n,n]$.

Due to this connection between the flattened $k$-Stirling permutations and set partitions, for $k=1,2$, it is natural to wonder whether there is a structure on the set partitions that is in bijection with the flattened $k$-Stirling permutations for arbitrary $k$. One of the objectives in this paper is to give a bijection between the flattened $k$-Stirling permutations and a special case of $(c,m)$-colored $B_n$ partitions, introduced by D.G.L. Wang in \cite{wang-colored-set}. This is done in Definition \ref{def: bij} in Section \ref{sec: bijection}.

We will show that flattened $k$-Stirling permutations of order $n$ are in bijective correspondence with a slightly modified variant of the $(k-1,k)$-colored $B_{n-1}$ partitions. 
From this bijection, using the theory of labelled structures (combinatorial species), we establish the following closed form expression for the exponential generating function (egf, henceforth) $$F_k(z):=\displaystyle \sum_{n\ge 0} \vert \flt(\stir_{n+1}^k) \vert \frac{z^n}{n!}.$$

\begin{theorem}\label{thm:egf} For $k\in \mathbb N, k\ge 1$, we have
    \begin{equation}
        F_k(z)=\exp\bigg((k-1)z+\frac{\exp(kz)-1}{k}\bigg).
    \end{equation}
\end{theorem}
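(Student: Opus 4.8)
The plan is to first invoke the bijection of Definition \ref{def: bij}, which identifies $\flt(\stir_{n+1}^k)$ with the modified $(k-1,k)$-colored $B_n$ partitions, i.e. certain colored type $B$ set partitions of $[-n,n]$. This converts the analytic statement into the enumeration of a family of labelled, weighted structures on the positive label set $[n]$, at which point the exponential formula for species becomes available: if every object decomposes canonically into a single distinguished component together with an arbitrary set of repeatable components, then its egf is the product of the egf of the distinguished component and the exponential of the egf of a single repeatable component.

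Concretely, I would decompose each colored type $B$ partition into (i) its zero block, the unique block closed under negation, and (ii) the set of remaining blocks, which occur in negation pairs $\{B,-B\}$. Since each label of $[n]$ lies either in the zero block or in exactly one such pair, this decomposition is precisely a labelled product, so $F_k(z)=Z(z)\cdot\exp(P(z))$, where $Z(z)$ is the egf counting admissible zero blocks and $P(z)$ is the egf counting a single negation pair (an ``ordinary block''). The next step is to read off these two ingredients from the coloring rules. For the zero block, each of its $j$ positive elements may be colored in $k-1$ ways, so a zero block supported on a $j$-element label set admits $(k-1)^j$ colorings, giving $Z(z)=\sum_{j\ge 0}(k-1)^j z^j/j!=\exp((k-1)z)$. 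For an ordinary block of size $m$, the unordered pair $\{B,-B\}$ is pinned down by fixing the sign (equivalently, the canonical color) of the element carrying the smallest label, after which the remaining $m-1$ elements each receive one of $k$ sign/color choices; this yields $k^{m-1}$ objects and hence $P(z)=\sum_{m\ge 1}k^{m-1}z^m/m!=(\exp(kz)-1)/k$. Substituting gives
\begin{equation*}
  F_k(z)=\exp((k-1)z)\cdot\exp\!\Big(\frac{\exp(kz)-1}{k}\Big)=\exp\!\Big((k-1)z+\frac{\exp(kz)-1}{k}\Big),
\end{equation*}
which is the claimed identity.

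I expect the main obstacle to be the weight bookkeeping rather than the species machinery. The delicate point is justifying the exponent $k^{m-1}$ (and not $k^m$) for an ordinary block, which hinges on correctly breaking the global $\pm$ symmetry of a type $B$ block pair by anchoring the smallest label, and on confirming that the $(k-1)$-fold coloring of the zero block and the $k$-fold coloring of the ordinary blocks are exactly what the modified $(k-1,k)$-coloring of Definition \ref{def: bij} prescribes. A secondary point to verify is that the zero-block/ordinary-block split is genuinely disjoint on labels, so that the product-of-egfs step is valid, and that the degree shift between order $n+1$ and the label set $[n]$ of $B_n$ is consistent with the indexing of $F_k(z)$. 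Once these are settled, the exponential formula closes the argument immediately.
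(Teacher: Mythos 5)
Your proposal is correct and follows essentially the same route as the paper: the paper also decomposes a good $k$-colored partition (equivalently, your colored type $B$ partition, via the shift $x \mapsto x+1$) into the distinguished first/zero block with $(k-1)^j$ colorings and a set of ordinary blocks with $k^{m-1}$ colorings each, realizing this as the labelled product $\mathrm{COLOR}_{k-1}\times \mathrm{SET}(\overline{\mathrm{RCOLOR}_k})$ and applying the product and composition rules to get $\exp\big((k-1)z+\frac{e^{kz}-1}{k}\big)$. The points you flag as delicate (the exponent $k^{m-1}$ from the forced color of each block minimum, and the shift between order $n+1$ and the label set of size $n$) are exactly the bookkeeping the paper handles through its definition of good $k$-colored partitions and the identification $\#\mathrm{G}_k(\{2,\dots,n+1\})=\vert \mathrm{GCP}_k(n+1)\vert=\vert\flt(\stir^k_{n+1})\vert$.
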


As a consequence of our egf, we obtain the following formula for the number of flattened $k$-Stirling permutations of order $n$. This affirms {\cite[Conjecture 5.2]{buck2023flattened}}.
\begin{corollary} [{\cite[Conjecture 5.2]{buck2023flattened}}] \label{thm: series-flat} For integers $k\ge 1$ and $n\ge 0$, we have
    \begin{equation}
         \vert \flt(\stir^k_{n+1}) \vert = e^{-1/k}\displaystyle \sum_{r\ge 0} \frac{(kr+k-1)^n}{r!k^r},  
    \end{equation}
    \end{corollary}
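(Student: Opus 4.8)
The plan is to derive the Corollary directly from Theorem~\ref{thm:egf} by expanding the inner exponential as a power series in $z$ and extracting the coefficient of $z^n/n!$. First I would write
\begin{equation}
    F_k(z)=\exp\big((k-1)z\big)\cdot\exp\!\left(\frac{\exp(kz)-1}{k}\right),
\end{equation}
and focus on the second factor. Setting $u=\exp(kz)$, the expression $\exp\!\big((\exp(kz)-1)/k\big)$ is a rescaled Bell-number generating function, so I would expand it as
\begin{equation}
    \exp\!\left(\frac{\exp(kz)-1}{k}\right)=e^{-1/k}\exp\!\left(\frac{\exp(kz)}{k}\right)=e^{-1/k}\sum_{r\ge 0}\frac{\exp(krz)}{r!\,k^r},
\end{equation}
where the last equality comes from expanding the outer exponential $\exp(w/k)=\sum_{r\ge0}(w/k)^r/r!$ with $w=\exp(kz)$.

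Next I would multiply by the first factor $\exp((k-1)z)$ and combine the two exponentials in $z$, obtaining
\begin{equation}
    F_k(z)=e^{-1/k}\sum_{r\ge 0}\frac{1}{r!\,k^r}\exp\!\big((kr+k-1)z\big).
\end{equation}
The justification for interchanging the summation over $r$ with multiplication by $\exp((k-1)z)$ is that everything is a formal power series (equivalently, the series converges absolutely on any bounded region of $z$), so term-by-term manipulation is valid. Finally I would expand each $\exp((kr+k-1)z)=\sum_{n\ge 0}(kr+k-1)^n z^n/n!$, swap the order of the two summations, and read off the coefficient of $z^n/n!$, giving
\begin{equation}
    \vert \flt(\stir^k_{n+1}) \vert = e^{-1/k}\sum_{r\ge 0}\frac{(kr+k-1)^n}{r!\,k^r},
\end{equation}
which is exactly the claimed formula.

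This argument is essentially a routine generating-function manipulation, so there is no deep obstacle; the only point requiring a little care is the rearrangement of the double sum and the interchange of summation with the series expansions. I would address this by noting that for each fixed power $z^n$ the contributing terms are absolutely summable in $r$ (the factor $(kr+k-1)^n/(r!\,k^r)$ decays super-exponentially in $r$), which legitimizes both the reordering and the extraction of coefficients. Thus the corollary follows immediately once Theorem~\ref{thm:egf} is established, and in particular it settles \cite[Conjecture 5.2]{buck2023flattened}.
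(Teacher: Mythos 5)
Your proposal is correct and follows essentially the same route as the paper: both start from Theorem~\ref{thm:egf}, pull out the constant $e^{-1/k}$, and expand $\exp\bigl(e^{kz}/k\bigr)$ as a Bell-type double series before reading off the coefficient of $z^n/n!$. The only cosmetic difference is that you merge the factor $e^{(k-1)z}$ into each term $e^{krz}$ before extracting coefficients, whereas the paper extracts coefficients first and recombines via a binomial convolution; the two calculations are interchangeable.
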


The proofs of Theorem \ref{thm:egf} and Corollary \ref{thm: series-flat} appear in Section \ref{section: proof-corollary}. For $k=2,3,4$, the sequence $\vert \flt(\stir_{n+1}^k)\vert$, and therefore, the number of $(k-1,k)$-colored $B_n$ partitions, matches the OEIS sequences given by \seqnum{A007405}, \seqnum{A355164}, \seqnum{A355167} respectively. In Section \ref{section: proof-corollary}, we show that our bijection also yields the following identity that connects the numbers $\vert \flt(\stir_n^k) \vert$ to the Stirling numbers of the second kind.
\begin{theorem}\label{thm:bij_rec} If $n\ge 0$, then
\begin{equation}
    \vert \flt(\stir_{n+1}^k) \vert = \sum_{i=0}^n \binom{n}{i} (k-1)^{i}\sum_{r=0}^{n-i} k^{n-i-r} S(n-i,r)
\end{equation}
where $S(a,b)$ denotes the Stirling number of the second kind with $S(a,0)=0$ for all $a\ge 0$.
\end{theorem}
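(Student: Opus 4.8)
The plan is to extract $\vert\flt(\stir_{n+1}^k)\vert$ directly as the coefficient $[z^n/n!]\,F_k(z)$, using the closed form supplied by Theorem~\ref{thm:egf}. The starting observation is that the egf factors as a product of two pieces,
\[
F_k(z)=\exp\!\big((k-1)z\big)\cdot\exp\!\Big(\tfrac{1}{k}\big(e^{kz}-1\big)\Big),
\]
whose individual egf-coefficient sequences I can recognise, after which a single binomial convolution will reproduce the double sum in the statement.

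The first factor is immediate: $\exp((k-1)z)=\sum_{i\ge0}(k-1)^i z^i/i!$, so its $i$-th egf-coefficient is $(k-1)^i$, which supplies the outer $\binom{n}{i}(k-1)^i$. The second factor is the heart of the matter. Here I would bring in the egf of the Touchard (exponential Bell) polynomials $\phi_m(x)=\sum_{r\ge0}S(m,r)x^r$, namely $\exp\!\big(x(e^t-1)\big)=\sum_{m\ge0}\phi_m(x)\,t^m/m!$, and specialise it by the substitution $t=kz$, $x=1/k$. This gives
\[
\exp\!\Big(\tfrac{1}{k}\big(e^{kz}-1\big)\Big)=\sum_{m\ge0}k^m\phi_m(1/k)\,\frac{z^m}{m!},\qquad k^m\phi_m(1/k)=\sum_{r=0}^{m}S(m,r)\,k^{m-r},
\]
so the $m$-th egf-coefficient of the second factor is exactly the inner sum of the statement with $m=n-i$.

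The final step is the product rule for exponential generating functions: if $A(z)=\sum_i a_i z^i/i!$ and $B(z)=\sum_m b_m z^m/m!$, then $[z^n/n!]\,A(z)B(z)=\sum_{i=0}^n\binom{n}{i}a_i b_{n-i}$. Taking $a_i=(k-1)^i$ and $b_m=\sum_{r}S(m,r)k^{m-r}$ yields the claimed identity at once. I expect the only genuine obstacle to be the recognition step for the second factor: matching $\exp(\tfrac1k(e^{kz}-1))$ to the Touchard egf under the rescaling $t=kz$, $x=1/k$, and correctly carrying the extra factor $k^m$ produced by the rescaling; everything else is routine bookkeeping.

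As an alternative---and presumably the route signalled by the remark that ``our bijection also yields this identity''---one can argue combinatorially on the image of $\flt(\stir_{n+1}^k)$ under the bijection of Definition~\ref{def: bij}: choose in $\binom{n}{i}(k-1)^i$ ways the $i$ elements governed by the $(k-1)$-coloring, partition the remaining $n-i$ elements into $r$ blocks in $S(n-i,r)$ ways, and assign one of $k$ colors to each of the $n-i-r$ non-minimal block elements to obtain the factor $k^{n-i-r}$. There the delicate point would be to verify that this three-stage decomposition matches the modified $(k-1,k)$-colored $B_n$ partition structure exactly, block minima and zero block included.
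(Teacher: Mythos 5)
Your primary argument is correct, but it is not the paper's proof. The paper derives the identity purely combinatorially: it first counts good $k$-colored partitions directly (Proposition \ref{prop:goodindexnumber}) --- choose the $i$ non-minimum elements of the first block in $\binom{n}{i}$ ways, color them in $(k-1)^i$ ways, partition the remaining $n-i$ elements into $r$ blocks in $S(n-i,r)$ ways, and color the non-minimum elements of those blocks in $k^{n-i-r}$ ways --- and then invokes the bijection $\Phi$ of Theorem \ref{thm:bijective} to transfer this count to $\flt(\stir_{n+1}^k)$. That is exactly the ``alternative'' you sketch in your final paragraph, so your guess about the paper's route is right; moreover, the delicate point you flag there is handled for free by the definition of $\mathrm{GCP}_k(n+1)$: block minima are forced to have color $c_1$, the block containing $1$ uses only $k-1$ colors, all other non-minima use $k$ colors, and no zero block appears after the paper's shift from Wang's $(k-1,k)$-colored $B_n$ partitions. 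Your main route instead extracts $[z^n/n!]\,F_k(z)$ from Theorem \ref{thm:egf}, factoring the egf and applying the Touchard polynomial identity $e^{x(e^t-1)}=\sum_{m\ge 0}\phi_m(x)\,t^m/m!$ at $t=kz$, $x=1/k$, followed by the binomial convolution rule; every step is correct, and there is no circularity, since the paper proves Theorem \ref{thm:egf} from the bijection and labelled-structure calculus rather than from Theorem \ref{thm:bij_rec}. What your route buys is brevity and a purely mechanical derivation; what it costs is that it sits at the end of a longer logical chain (bijection, then labelled structures, then egf, then coefficient extraction), whereas the paper's count is self-contained once the bijection exists and explains each factor of the formula combinatorially. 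One small point your computation makes visible: since $\phi_0(1/k)=1$, the identity requires the convention $S(0,0)=1$ for the $i=n$ term (and for the case $n=0$); the paper's stated convention that $S(a,0)=0$ for all $a\ge 0$ is off at $a=0$, and both proofs implicitly use $S(0,0)=1$.
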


In the same section, we also prove a recurrence relation for the numbers $\vert \flt(\stir_n^k) \vert$ that was conjectured in \cite{buck2023flattened}.
\begin{theorem}[{\cite[Conjecture 5.2]{buck2023flattened}}]\label{thm: recurrence}
    For $n\ge 1$, the numbers $\vert \flt(\stir^k_{n+1})\vert$ satisfy the recurrence relation 
\begin{equation}
    \vert \flt(\stir^k_{n+1}) \vert= (k-1)\vert \flt(\stir^k_{n}) \vert+\displaystyle \sum_{r=1}^{n} \binom{n-1}{r-1} k^{r-1} \vert \flt(\stir^k_{n-r+1}) \vert, 
\end{equation} with $\vert \flt(\stir^k_1)\vert=1$.
\end{theorem}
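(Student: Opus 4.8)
The plan is to prove the recurrence for $\vert \flt(\stir^k_{n+1}) \vert$ directly from the closed-form formula established in Corollary~\ref{thm: series-flat}, since an explicit expression makes the computation self-contained and avoids re-deriving the bijective machinery. Write $a_n := \vert \flt(\stir^k_{n+1}) \vert = e^{-1/k} \sum_{r \ge 0} \frac{(kr+k-1)^n}{r! \, k^r}$, so that the recurrence to be shown reads $a_n = (k-1) a_{n-1} + \sum_{r=1}^{n} \binom{n-1}{r-1} k^{r-1} a_{n-r}$. The guiding idea is to split the power $(kr+k-1)^n$ using the binomial theorem as $(kr+k-1)^n = \bigl((k-1) + kr\bigr)^n = \sum_{j=0}^{n} \binom{n}{j} (k-1)^{n-j} (kr)^{j}$, peel off the single term that reproduces $(k-1) a_{n-1}$, and recognize the remaining sum as the convolution on the right-hand side after an index shift.

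First I would substitute the closed form into both sides and clear the common factor $e^{-1/k}$, reducing the claim to an identity among the series $\sum_r \frac{(kr+k-1)^m}{r! k^r}$ for various exponents $m$. The cleanest route is to work through the exponential generating function $F_k(z)$ of Theorem~\ref{thm:egf} rather than the raw series: since $a_n = F_k^{(n)}(0)$, the recurrence is equivalent to a differential relation for $F_k$. Differentiating $F_k(z) = \exp\bigl((k-1)z + \frac{e^{kz}-1}{k}\bigr)$ gives $F_k'(z) = \bigl((k-1) + e^{kz}\bigr) F_k(z)$, which is the key analytic identity. This single first-order ODE encodes exactly the claimed recurrence once one extracts coefficients.

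The main step is then to read off coefficients from $F_k'(z) = (k-1) F_k(z) + e^{kz} F_k(z)$. The term $(k-1) F_k(z)$ contributes $(k-1) a_{n-1}$ to the coefficient of $\frac{z^{n-1}}{(n-1)!}$ in $F_k'$, i.e.\ to $a_n$. For the product $e^{kz} F_k(z)$, I would use the fact that the egf of the sequence $(k^r)_{r \ge 0}$ is $e^{kz}$, so the coefficient of $\frac{z^{n-1}}{(n-1)!}$ in $e^{kz} F_k(z)$ is the binomial convolution $\sum_{r} \binom{n-1}{r} k^{r} a_{n-1-r}$; reindexing $r \mapsto r-1$ turns this into $\sum_{r=1}^{n} \binom{n-1}{r-1} k^{r-1} a_{n-r}$, matching the stated sum exactly. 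Comparing the coefficient of $z^{n-1}/(n-1)!$ on both sides of the ODE yields the recurrence, with the base case $a_0 = \vert \flt(\stir^k_1)\vert = 1$ coming from $F_k(0) = 1$.

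I expect the only genuine obstacle to be bookkeeping: verifying the index shift in the convolution and confirming that the Leibniz/binomial convolution for the product of two exponential generating functions is applied with the correct binomial coefficient $\binom{n-1}{r}$ (the derivative drops the order from $n$ to $n-1$). Everything else is a routine consequence of the ODE $F_k' = \bigl((k-1)+e^{kz}\bigr)F_k$, which itself follows immediately by differentiating the explicit form of $F_k$ from Theorem~\ref{thm:egf}. An alternative, purely series-based verification—manipulating $\sum_r \frac{(kr+k-1)^n}{r!k^r}$ via the binomial split above and the shift $r \mapsto r+1$ to handle the factor $kr$—would give the same result and could be offered as a cross-check, but the generating-function argument is shorter and I would make it the primary proof.
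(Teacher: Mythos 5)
Your proposal is correct, but it takes a genuinely different route from the paper's own proof. The paper argues bijectively: identifying $\flt(\stir^k_{n+1})$ with good $k$-colored partitions of $[n+1]$ (Theorem \ref{thm:bijective}), it conditions on the block containing the element $n+1$. If that block is not the first block and has $r$ elements, it can be chosen and colored in $\binom{n-1}{r-1}k^{r-1}$ ways, and standardizing the remaining blocks yields a good $k$-colored partition of $[n-r+1]$; this gives the sum. If $n+1$ lies in the first block, it carries one of $k-1$ colors and its removal leaves a good $k$-colored partition of $[n]$; this gives the term $(k-1)\vert \flt(\stir^k_{n})\vert$. You instead differentiate the egf of Theorem \ref{thm:egf} to obtain the ODE $F_k'(z)=\bigl((k-1)+e^{kz}\bigr)F_k(z)$ and extract coefficients; your bookkeeping is right, since $\bigl[z^{n-1}/(n-1)!\bigr]\,e^{kz}F_k(z)=\sum_{j=0}^{n-1}\binom{n-1}{j}k^{j}a_{n-1-j}$, which reindexes to $\sum_{r=1}^{n}\binom{n-1}{r-1}k^{r-1}a_{n-r}$, and the base case $a_0=F_k(0)=1$ is immediate. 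The argument is non-circular, because Theorem \ref{thm:egf} is proved in the paper independently of the recurrence. What the paper's approach buys is combinatorial transparency: each term of the recurrence receives a direct counting interpretation, and the proof needs only the bijection, not the egf. What your approach buys is brevity and a mechanical verification once the egf is available; it also makes explicit that the recurrence is equivalent to a first-order linear ODE for $F_k$, an observation the paper does not record.
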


One of our main results requires the following definition.

\begin{definition} For $n\ge 0$, we define the $n^{th}$ $\flt$-Eulerian polynomial of order $k$
    $$A^{\flt}_{n,k}(t):=\displaystyle \sum_{W\in \flt(\stir^k_n)} t^{\des(W)}.$$
\end{definition}

In Section \ref{section: prelim}, we give the relevant background from the theory of weighted species/weighted labelled structures. Using this, in Section \ref{section: descent-egf}, we obtain the following egf. 

\begin{theorem}
    \label{thm: main-egf}
For $k\ge 1$ and $n\ge 0$,
    \begin{equation}
          \displaystyle \sum_{n\ge 0} A^{\flt}_{n+1,k}(t) \frac{z^n}{n!}= (t(e^z-1)+1)^{k-1}\exp\Bigg(z+\displaystyle \sum_{j=1}^{k} \frac{k!}{(k-j)!}H_j(z)t^j\Bigg)
    \end{equation}
where $H_j(z):=\displaystyle \sum_{n=0}^{\infty} S(n-1,j)\frac{z^n}{n!}$ and $S(a,b)$ are the Stirling numbers of the second kind with $S(a,0)=0$ for $a\ge 0$.  
\end{theorem}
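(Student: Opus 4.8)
The plan is to transport the whole computation to the colored set-partition side through the bijection of Definition~\ref{def: bij}, and then to decompose the resulting $t$-weighted structure into independent pieces whose exponential generating functions follow from the product and set-of (exponential) rules for weighted species. The first task is to determine what $\des(W)$ becomes under the bijection: I would show that for $W\in\flt(\stir^k_{n+1})$ each descent corresponds to a purely local feature of the associated modified $(k-1,k)$-colored $B_n$ partition, so that the weight $t^{\des(W)}$ factors as a product of local weights, one contributed by each block. This turns the left-hand side into a weighted egf in which every block carries a monomial in $t$, and makes the species machinery applicable.

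Next I would split the type $B$ structure into its two natural constituents: the zero block, which is stable under negation, and the unordered family of non-zero block pairs $\{B,-B\}$. Since these are built independently on disjoint label sets, the product rule for weighted species factors the egf accordingly. The $k-1$ colors available on the zero block should produce exactly the factor $(t(e^z-1)+1)^{k-1}$: grouping the positive elements of the zero block by color splits it into $k-1$ classes, each an arbitrary set that contributes a descent precisely when it is nonempty, so each class has weighted egf $t(e^z-1)+1$ and the $k-1$ classes multiply.

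The family of non-zero block pairs is a set of connected components, so the exponential rule expresses its egf as $\exp$ of the egf of a single prime block; identifying this prime-block egf with $z+\sum_{j=1}^{k}\frac{k!}{(k-j)!}H_j(z)t^j$ is the heart of the argument and the step I expect to be the main obstacle. The isolated term $z$ records the singleton block (one distinguished element, no descents). For a larger block I would argue that one element---the representative starting the associated run---is distinguished and contributes nothing, while the remaining $n-1$ elements are colored from the $k$ available colors, a block with exactly $j$ distinct colors carrying $j$ descents; counting such colorings gives $\binom{k}{j}j!\,S(n-1,j)=\frac{k!}{(k-j)!}S(n-1,j)$, which is precisely the coefficient of $t^j$ inside $H_j(z)$. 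Making this correspondence exact---that within one connected block the joint size-and-descent distribution is governed by $S(n-1,j)$ together with the ordered color factor $\frac{k!}{(k-j)!}$---is where the detailed analysis of the bijection must be carried out most carefully.

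Multiplying the zero-block factor by the exponential of the prime-block egf then yields the stated identity. As a consistency check I would set $t=1$ and invoke $\sum_{j}\frac{k!}{(k-j)!}S(m,j)=k^{m}$ to collapse $z+\sum_{j}\frac{k!}{(k-j)!}H_j(z)$ to $\frac{e^{kz}-1}{k}$, thereby recovering $F_k(z)$ from Theorem~\ref{thm:egf} and confirming that the descent refinement is compatible with the total enumeration.
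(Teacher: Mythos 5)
Your proposal is correct and follows essentially the same route as the paper: your descent-locality claim is exactly the paper's Lemma \ref{lemma: descent-color} (the descents of a block's subword equal the number of distinct colors on its non-minimum elements), and your zero-block/prime-block factorization is precisely the paper's weighted-species decomposition $\mathrm{DESGCP}_k=\mathrm{DCOL}_{k-1}\times \mathrm{SET}(\overline{\mathrm{RDCOL}_k})$, with the surjection count $\binom{k}{j}j!\,S(n-1,j)=\frac{k!}{(k-j)!}S(n-1,j)$ yielding the same single-block egf $z+\sum_{j=1}^{k}\frac{k!}{(k-j)!}H_j(z)t^j$ and the factor $(t(e^z-1)+1)^{k-1}$ for the first block. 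The only cosmetic difference is that you phrase the decomposition in type $B$ language (zero block and non-zero block pairs) while the paper works directly with good $k$-colored partitions, which the paper notes are in bijection with the $(k-1,k)$-colored $B_n$ partitions.
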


As a corollary, by setting $k=1$ in Theorem \ref{thm: main-egf}, we obtain a different proof of Theorem 13 from \cite{nabawanda-flatten}.
\begin{corollary}[{\cite[Theorem 13]{nabawanda-flatten}}] \label{thm:naba}
We have the following egf of the $\flt$-Eulerian polynomials of order $1$.
    \begin{equation}
        \displaystyle \sum_{n\ge 0} A^{\flt}_{n+1,1}(t) \frac{z^n}{n!}= e^{z+t(e^z-z-1)}.
    \end{equation} 
\end{corollary}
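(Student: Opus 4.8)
The plan is to specialize Theorem~\ref{thm: main-egf} at $k=1$ and simplify. First I would substitute $k=1$ directly into the right-hand side. The prefactor $(t(e^z-1)+1)^{k-1}$ becomes $(t(e^z-1)+1)^0 = 1$, so it disappears entirely. The remaining exponential is $\exp\bigl(z + \sum_{j=1}^{1}\frac{1!}{(1-j)!}H_j(z)t^j\bigr)$, whose sum has a single term at $j=1$, namely $\frac{1!}{0!}H_1(z)t = H_1(z)t$. Thus the whole expression collapses to $\exp\bigl(z + t\,H_1(z)\bigr)$, and the entire content of the corollary reduces to identifying $H_1(z)$ in closed form and recognizing that $z + t\,H_1(z)$ equals $z + t(e^z - z - 1)$.

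The key step, therefore, is to compute $H_1(z) = \sum_{n=0}^{\infty} S(n-1,1)\frac{z^n}{n!}$. Here I would recall the standard value of the Stirling numbers of the second kind with second argument $1$: one has $S(m,1) = 1$ for every $m \ge 1$ and $S(m,1) = 0$ for $m \le 0$ (consistent with the paper's convention $S(a,0)=0$). Writing $m = n-1$, the coefficient $S(n-1,1)$ equals $1$ precisely when $n-1 \ge 1$, i.e. $n \ge 2$, and vanishes when $n \in \{0,1\}$ (since $S(-1,1)=0$ and $S(0,1)=0$). Hence
\begin{equation}
H_1(z) = \sum_{n\ge 2} \frac{z^n}{n!} = \Bigl(\sum_{n\ge 0}\frac{z^n}{n!}\Bigr) - 1 - z = e^z - z - 1.
\end{equation}
Substituting this back gives $z + t\,H_1(z) = z + t(e^z - z - 1)$, which is exactly the exponent in the claimed egf.

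Assembling these pieces yields
\begin{equation}
\sum_{n\ge 0} A^{\flt}_{n+1,1}(t)\frac{z^n}{n!} = \exp\bigl(z + t(e^z - z - 1)\bigr) = e^{z + t(e^z - z - 1)},
\end{equation}
which is the statement of the corollary. I expect essentially no obstacle here, as the argument is a pure specialization: the only point requiring a moment of care is the bookkeeping of the index shift in $S(n-1,1)$ near $n=0,1$, where one must verify that the low-order terms vanish so that the geometric-exponential series genuinely starts at $n=2$. Since the corollary is explicitly advertised as ``setting $k=1$ in Theorem~\ref{thm: main-egf},'' the proof is simply making that substitution rigorous and invoking the known closed form for $H_1(z)$.
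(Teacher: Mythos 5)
Your proposal is correct and matches the paper's own route: the paper likewise obtains the corollary by setting $k=1$ in Theorem~\ref{thm: main-egf}, with the prefactor $(t(e^z-1)+1)^{k-1}$ reducing to $1$ and $H_1(z)=e^z-z-1$ supplying the exponent $z+t(e^z-z-1)$. Your direct verification of $H_1(z)$ from the values $S(m,1)$ (rather than the paper's citation of the explicit Stirling recurrence) is a negligible difference in bookkeeping, not a different argument.
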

As another corollary, by setting $k=2$ in Theorem \ref{thm: main-egf}, we get the following egf which solves an open problem from \cite{buck2023flattened}.
\begin{corollary} \label{thm:stirling} We have the following egf of the $\flt$-Eulerian polynomials of order $2$.
    \begin{equation}
        \displaystyle \sum_{n\ge 0} A^{\flt}_{n+1,2}(t) \frac{z^n}{n!}= (t(e^z-1)+1)\exp\bigg(z+2t(e^z-z-1)+2t^2\bigg(\frac{3 +2 z - 4 e^z + e^{2 z}}{4}\bigg)\bigg).
    \end{equation}
\end{corollary}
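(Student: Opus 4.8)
The plan is to derive this corollary as a direct specialization of Theorem \ref{thm: main-egf} at $k=2$, after which the only real work is evaluating the two auxiliary series $H_1(z)$ and $H_2(z)$ in closed form. Setting $k=2$ immediately collapses the prefactor to $(t(e^z-1)+1)^{k-1}=t(e^z-1)+1$ and truncates the sum in the exponent to the two terms $j=1$ and $j=2$. For both of these the coefficient $\frac{k!}{(k-j)!}=\frac{2!}{(2-j)!}$ equals $2$, so the exponent becomes $z+2H_1(z)t+2H_2(z)t^2$. It therefore remains only to identify $H_1(z)$ and $H_2(z)$ explicitly.

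First I would compute $H_1(z)=\sum_{n\ge 0}S(n-1,1)\frac{z^n}{n!}$. Since $S(m,1)=1$ for $m\ge 1$ while $S(m,1)=0$ for $m\le 0$, the only surviving terms are those with $n\ge 2$; summing $\sum_{n\ge 2}\frac{z^n}{n!}$ gives $H_1(z)=e^z-z-1$. This matches the factor $2t(e^z-z-1)$ appearing in the claimed exponent.

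The slightly more involved step is $H_2(z)=\sum_{n\ge 0}S(n-1,2)\frac{z^n}{n!}$. Here I would use the identity $S(m,2)=2^{m-1}-1$, valid for $m\ge 1$, so that $S(n-1,2)=2^{n-2}-1$ for $n\ge 3$ and vanishes for $n\le 2$. Splitting the resulting series as $\sum_{n\ge 3}2^{n-2}\frac{z^n}{n!}-\sum_{n\ge 3}\frac{z^n}{n!}$, I recognize the first sum as $\frac14\big(e^{2z}-1-2z-2z^2\big)$ (a truncation of the exponential of $2z$) and the second as $e^z-1-z-\tfrac{z^2}{2}$. Combining and simplifying the finitely many low-degree terms yields $H_2(z)=\frac{e^{2z}-4e^z+3+2z}{4}$, so that $2H_2(z)t^2=2t^2\big(\frac{3+2z-4e^z+e^{2z}}{4}\big)$, exactly the last term in the claimed exponent.

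Substituting these closed forms into $(t(e^z-1)+1)\exp\big(z+2H_1(z)t+2H_2(z)t^2\big)$ then gives the stated generating function. The only point demanding care is the bookkeeping in $H_2(z)$: correctly handling the index shift $m=n-1$, the vanishing of the low-order terms, and the two truncated exponential series. This is purely mechanical algebra rather than a conceptual obstacle, so I expect no genuine difficulty beyond keeping track of the finitely many terms of low degree.
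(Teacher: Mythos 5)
Your proposal is correct and follows essentially the same route as the paper: specialize Theorem \ref{thm: main-egf} at $k=2$ (noting $\frac{2!}{(2-j)!}=2$ for $j=1,2$) and evaluate $H_1(z)=e^z-z-1$ and $H_2(z)=\frac{1}{4}(3+2z-4e^z+e^{2z})$ in closed form. The only cosmetic difference is that you use the closed formula $S(m,2)=2^{m-1}-1$ directly, whereas the paper derives the same closed forms from the recursion $S(n,k)=\frac{k^n}{k!}-\sum_{r=1}^{k-1}\frac{S(n,r)}{(k-r)!}$, which amounts to the same computation.
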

 
In \cite{buck2023flattened}, the authors give the following formula for the number of flattened $2$-Stirling permutations with exactly $2$ runs. 
\begin{proposition}[{\cite[Corollary 4.5]{buck2023flattened}}]\label{prop: buck-2-runs}
    The number of flattened $2$-Stirling permutations of order $n$ with exactly two runs is $$|\flt_2(\stir^2_{n})|=3(2^{n-1}-1)-2(n-1).$$
\end{proposition}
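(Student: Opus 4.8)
The plan is to read the answer directly off the exponential generating function for the $\flt$-Eulerian polynomials of order $2$ recorded in Corollary~\ref{thm:stirling}. Since a run is a maximal weakly increasing factor, the number of runs of a word exceeds its number of strict descents by exactly one; hence a flattened $2$-Stirling permutation has exactly two runs if and only if it has exactly one descent. Consequently $|\flt_2(\stir^2_n)|$ is precisely the coefficient of $t^1$ in the descent polynomial $A^{\flt}_{n,2}(t)$, and it suffices to extract $[t^1 z^{n-1}]$, up to the factor $(n-1)!$, from the bivariate generating function of Corollary~\ref{thm:stirling}, keeping track of the unit shift between order $n$ and the index $n+1$ used there.

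First I would isolate the part that is linear in $t$. Writing the right-hand side of Corollary~\ref{thm:stirling} as $(1+t(e^z-1))\,e^z\exp\!\big(2t(e^z-z-1)+\tfrac{t^2}{2}(3+2z-4e^z+e^{2z})\big)$, I expand the inner exponential as $1+2t(e^z-z-1)+O(t^2)$ and multiply by the prefactor $1+t(e^z-1)$. Collecting the coefficient of $t$ gives
\begin{equation*}
\sum_{n\ge 0}\big([t^1]A^{\flt}_{n+1,2}(t)\big)\frac{z^n}{n!}=e^z\big((e^z-1)+2(e^z-z-1)\big)=3e^{2z}-2ze^z-3e^z.
\end{equation*}

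It then remains to read off coefficients. Using $n![z^n]e^{2z}=2^n$, $n![z^n]ze^z=n$, and $n![z^n]e^z=1$, the display above yields $[t^1]A^{\flt}_{n+1,2}(t)=3\cdot 2^n-2n-3$. Re-indexing by $n\mapsto n-1$ to pass from the egf index to permutations of order $n$ gives $|\flt_2(\stir^2_n)|=3\cdot 2^{n-1}-2(n-1)-3=3(2^{n-1}-1)-2(n-1)$, exactly as claimed.

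The computation is essentially mechanical, so the only places requiring care are the two bookkeeping points: that ``two runs'' corresponds to the $t^1$ (one-descent) term rather than the $t^2$ term, and the unit shift between order $n$ and the index $n+1$ appearing in Corollary~\ref{thm:stirling}. As an independent check one could instead argue combinatorially through the bijection with type $B$ set partitions, where the one-descent permutations correspond to a restricted family of partitions that can be counted directly; but the generating-function route above is the shortest and reuses a result already in hand.
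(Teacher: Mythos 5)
Your proof is correct, but it takes a genuinely different route from the paper's. The paper never argues through generating functions here: it records this statement as the $k=2$ specialization of Proposition~\ref{prop: 2_runs}, which it proves combinatorially via the bijection $\Phi$ with good $k$-colored partitions. In that argument, a flattened permutation with two runs corresponds to a colored partition having exactly one non-singleton block whose non-minimum elements all share the same color, and the two cases (the block contains ``$1$'' or it does not) contribute $(k-1)(2^{n-1}-1)$ and $k\bigl(2^{n-1}-1-(n-1)\bigr)$ respectively; at $k=2$ this sums to $3(2^{n-1}-1)-2(n-1)$. You instead extract the coefficient of $t^{1}$ from the egf of Corollary~\ref{thm:stirling}: your expansion $(1+t(e^z-1))\bigl(1+2t(e^z-z-1)+O(t^2)\bigr)$, the resulting series $e^z\bigl((e^z-1)+2(e^z-z-1)\bigr)=3e^{2z}-2ze^z-3e^z$, the coefficient reading $3\cdot 2^{n}-2n-3$, and the shift $n\mapsto n-1$ are all correct, as is the identification of ``two runs'' with ``one descent'' via $\run(W)=\des(W)+1$. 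There is also no circularity in your dependence on Corollary~\ref{thm:stirling}, since Theorem~\ref{thm: main-egf} is established through Lemma~\ref{lemma: descent-color} and weighted species, independently of any run-counting results. The trade-off: your route is mechanical once the egf is available, but it leans on the heaviest machinery in the paper and, as written, yields only the $k=2$ case (one could of course repeat the extraction in Theorem~\ref{thm: main-egf} for general $k$); the paper's counting argument is elementary, independent of Section~\ref{section: descent-egf}, and delivers the arbitrary-$k$ formula $(2k-1)(2^{n-1}-1)-k(n-1)$ in one stroke.
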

In Section \ref{sec: run-refinement}, we generalize this to flattened $k$-Stirling permutations by showing the following result. It is easy to see that Proposition \ref{prop: 2_runs} gives Proposition \ref{prop: buck-2-runs} when $k=2$.
\begin{proposition}\label{prop: 2_runs} For $n\ge 0$, $k\ge 1$,
    $$\vert \flt_2(\stir_n^k) \vert=(2k-1)(2^{n-1}-1)-k(n-1).$$
\end{proposition}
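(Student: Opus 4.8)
The plan is to characterize the 2-run flattened $k$-Stirling permutations completely, and then count them directly. Write such a permutation as $W = R_1 R_2$, where $R_1, R_2$ are its two (weakly increasing) runs. Since each run is weakly increasing, the multiset of entries of a run determines the run; hence $W$ is determined by how the $k$ copies of each value in $[n]$ are distributed between the two runs. Having exactly two runs forces a descent, so $\max(R_1) > \min(R_2)$, while the flattened hypothesis forces $\min(R_1) \le \min(R_2)$ (the leading term of a weakly increasing run is its minimum). I would therefore reduce the whole problem to counting the admissible distributions of values subject to these two inequalities together with the $k$-Stirling condition.

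The crux is a structural lemma isolating the effect of the $k$-Stirling condition across the single descent: \emph{at most one value occurs in both runs, and if a value $i$ occurs in both, then $i = \min(R_2)$ and its copies in $R_2$ sit flush at the left end of $R_2$}. To see this, if $i$ occurs in both runs, then its last copy in $R_1$ and its first copy in $R_2$ are consecutive occurrences, so every entry strictly between them must exceed $i$; but the entries of $R_2$ preceding its first copy of $i$ are all strictly less than $i$ (they lie in the weakly increasing prefix of $R_2$ and cannot equal $i$), forcing there to be none, i.e. $i = \min(R_2)$. I would also check the converse, that every distribution obeying this single-shared-value constraint and the two inequalities yields a genuine $k$-Stirling permutation, since within each run equal entries are automatically consecutive.

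With the structure fixed I would split into two cases according to whether a value is shared. Let $S_1, S_2 \subseteq [n]$ denote the value sets of $R_1, R_2$. In the no-shared-value case, $[n] = S_1 \sqcup S_2$; the flattened inequality forces $1 \in S_1$, and the descent inequality $\max(S_1) > \min(S_2)$ excludes exactly the $n-1$ initial-segment partitions $S_1 = \{1, \dots, j\}$, giving $(2^{n-1}-1)-(n-1)$. In the one-shared-value case, I would sum over the shared value $b = \min(S_2)$: its copies split between the runs in $k-1$ ways, every value below $b$ is forced to appear only in $R_1$, and each value above $b$ is freely assigned to one run except that at least one must land in $R_1$ (to realize $\max(R_1) > b$), contributing $(k-1)\sum_{b=1}^{n}(2^{n-b}-1) = (k-1)(2^n-1-n)$.

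The final step is to add the two contributions and simplify, checking that $2^{n-1}-n + (k-1)(2^n-1-n)$ collapses to $(2k-1)(2^{n-1}-1)-k(n-1)$; setting $k=2$ then recovers Proposition \ref{prop: buck-2-runs}. I expect the main obstacle to be the structural lemma, namely extracting the precise consequence of the $k$-Stirling condition across the lone descent, since once the shared value is pinned to $\min(R_2)$ and placed flush left in $R_2$, everything downstream is careful but routine bookkeeping.
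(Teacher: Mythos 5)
Your proof is correct, and it takes a genuinely different route from the paper's. The paper deduces the count from the bijection $\Phi$ of Section \ref{sec: bijection}: since descents arise only inside the subword produced by a single block, and the number of descents a block produces equals the number of distinct colors on its non-minimum elements (formalized later as Lemma \ref{lemma: descent-color}), a two-run permutation corresponds to a good $k$-colored partition with exactly one non-singleton block all of whose non-minimum elements carry one common color; the count then splits on whether that block contains $1$, giving $(k-1)(2^{n-1}-1)$ (only $k-1$ colors are allowed in the first block) plus $k\bigl(2^{n-1}-1-(n-1)\bigr)$. You instead work directly on the permutation $W=R_1R_2$: your structural lemma pins down how the $k$-Stirling condition interacts with the unique descent (at most one value straddles it, and such a value must equal $\min(R_2)$ and sit flush left in $R_2$), after which you count distributions of values between the runs, splitting on whether a value is shared: $(2^{n-1}-1)-(n-1)$ in the unshared case and $(k-1)(2^{n}-1-n)$ in the shared case, which indeed sum to $(2k-1)(2^{n-1}-1)-k(n-1)$. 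What your route buys is self-containedness: it needs neither the bijection nor the color-descent dictionary, and it explains the formula purely in terms of runs. What the paper's route buys is brevity and reusability: once $\Phi$ is in place, the characterization ``one non-singleton block, constant color'' settles this proposition in a few lines, and the same template scales to the three-run and maximum-run counts (Propositions \ref{prop:3_runs} and \ref{max_runs}). Note also that the two case divisions do not correspond to each other: the paper's case of the block containing $1$ is exactly your shared-value case with $b=1$, while the paper's other case is the union of your unshared case and your shared-value cases with $b\ge 2$, both totals being $k(2^{n-1}-n)$.
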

The authors of \cite{buck2023flattened} conjectured the following formula for the number of flattened $2$-Stirling permutations with exactly $3$ runs.
\begin{conjecture}[{\cite[Conjecture $5.1$]{buck2023flattened}}] \label{conj: buck-3-runs}    
 The number of flattened $2$-Stirling permutations of order $n$ with exactly three runs is
 \begin{eqnarray}
      |\flt_3(\stir_n)| &=& 2\sum_{i = 1}^{n-1} \binom{n-1}{i} \left( \sum_{j = 2}^{n-1 - i} \binom{n-1 -i}{j} \right) 
    \nonumber \\
    &+& \sum_{i = 2}^{n-1} \binom{n-1}{i} \left( \sum_{i = 2}^{n-1 - i} \binom{n-1 -i}{j} \right)
    + \sum_{i = 3}^{n-1} (2^{i-1} - 2)\binom{n-1}{i}.
 \end{eqnarray}
\end{conjecture}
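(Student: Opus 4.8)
The plan is to obtain $\vert\flt_3(\stir_n^2)\vert$ as a single coefficient of the bivariate descent generating function already computed in Corollary \ref{thm:stirling}. Since the runs of a flattened permutation are its maximal weakly increasing factors, the number of runs of $W$ equals $\des(W)+1$; hence a flattened $2$-Stirling permutation has exactly three runs if and only if it has exactly two descents, so
\[
\vert \flt_3(\stir_n^2)\vert = [t^2]\,A^{\flt}_{n,2}(t) = (n-1)!\,[z^{n-1}]\Big([t^2]\,G(z,t)\Big),
\]
where $G(z,t)$ is the right-hand side of Corollary \ref{thm:stirling}. The reduction can be pinned down by a consistency check against Proposition \ref{prop: buck-2-runs}: extracting $[t^1]$ instead of $[t^2]$ from $G$ must reproduce $\vert\flt_2(\stir_n^2)\vert=3(2^{n-1}-1)-2(n-1)$, which fixes the index shift $n\mapsto n-1$ beyond doubt.

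Next I would carry out the extraction explicitly. Writing $G(z,t)=\big(1+t(e^z-1)\big)\exp\big(z+tP+t^2Q\big)$ with $P=2(e^z-z-1)$ and $Q=\tfrac12(3+2z-4e^z+e^{2z})$, one reads off $[t^2]\,G = e^z\big(Q+\tfrac12 P^2+(e^z-1)P\big)$. After simplification this is an explicit linear combination of $e^z$, $ze^z$, $z^2e^z$, $e^{2z}$, $ze^{2z}$ and $e^{3z}$. Extracting the coefficient of $z^{n-1}/(n-1)!$ via the elementary rules $[z^m/m!]e^{cz}=c^m$ and, more generally, $[z^m/m!]\big(z^a e^{cz}\big)=c^{\,m-a}\,m(m-1)\cdots(m-a+1)$, then produces a closed form for $\vert\flt_3(\stir_n^2)\vert$ as a combination of $3^{n-1}$, $2^{n-1}$, $n\,2^{n-1}$ and a polynomial in $n$ of degree two.

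Finally, it remains to verify that the conjectured three-part sum collapses to this same closed form. Here I would first resolve the evident misprint in the second double sum, whose inner summation index is printed as $i$ but must read $j$ (matching the first double sum), and then evaluate each block using $\sum_{j=2}^{p}\binom{p}{j}=2^{p}-1-p$ together with the standard identities $\sum_{i}\binom{m}{i}2^{m-i}=3^{m}$, $\sum_{i}\binom{m}{i}(m-i)=m\,2^{m-1}$ and $\sum_i\binom{m}{i}=2^m$, taking $m=n-1$. The main obstacle is precisely this reconciliation: the nested binomial sums must be resummed in the correct order and the three overlapping starting ranges ($i\ge 1$, $i\ge 2$, $i\ge 3$) tracked carefully, so that the constant, linear, quadratic, $2^{n-1}$, $n\,2^{n-1}$ and $3^{n-1}$ contributions can be matched against the egf-derived closed form term by term; the value forced by Corollary \ref{thm:stirling} is the unambiguous target against which the summation must be checked. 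A purely bijective alternative is also available — classifying a flattened $2$-Stirling permutation with two descents by the mutual order of the leading letters of its three runs yields a natural three-case decomposition that mirrors the three groups of terms — but making that case analysis rigorous is more delicate than the generating-function route, so I would use it only to explain the shape of the formula.
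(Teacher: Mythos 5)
Your route is genuinely different from the paper's: for this statement the paper never touches the generating function, but instead proves the general Proposition \ref{prop:3_runs} by a four-case analysis of good $k$-colored partitions under the bijection $\Phi$ (one or two non-singleton blocks, each either containing ``1'' or not), and then observes that setting $k=2$ recovers the conjecture. Your reduction $\vert\flt_3(\stir_n^2)\vert=[t^2]A^{\flt}_{n,2}(t)$, the index shift (correctly pinned by the consistency check against Proposition \ref{prop: buck-2-runs}), and the coefficient extraction from Corollary \ref{thm:stirling} are all sound; carrying them out gives
\begin{equation*}
\vert\flt_3(\stir_n^2)\vert=\tfrac{9}{2}\,3^{n-1}-(3n+7)\,2^{n-1}+2n^2+n+\tfrac{5}{2},
\end{equation*}
which returns $8$, $70$, $374$ for $n=4,5,6$, in agreement with Table \ref{tab:data}.

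The gap is in the final reconciliation step, and it is fatal to the plan as written. The misprint in the quoted conjecture is not only the inner index $i\mapsto j$: under your proposed reading, the identity you intend to verify is simply false. At $n=5$ your reading of the right-hand side gives $2\cdot 22+6+14=64$, and at $n=6$ it gives $2\cdot 105+50+64=324$, whereas the true counts are $70$ and $374$. The discrepancy is exactly one additional copy of the second double sum: the statement can only be correct, and only matches Proposition \ref{prop:3_runs} at $k=2$, if the second term reads $2\sum_{i=2}^{n-1}\binom{n-1}{i}\sum_{j=2}^{n-1-i}\binom{n-1-i}{j}$. That factor $2$ is the specialization of $\frac{k^2}{2}$ at $k=2$ in the paper's Case 4 (the two ways to color the non-singleton block not containing ``1''), and it was lost in the transcription together with the index. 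So your term-by-term evaluation via $\sum_{j=2}^{p}\binom{p}{j}=2^p-1-p$ would end in a mismatch against your (correct) egf target, and the proof would not close. To repair it you must first identify this second typo --- for instance by matching your closed form against the sum to locate the deficit, or by redoing the combinatorial count that separates the analogue of the paper's Case 3 and Case 4 --- and only then does the verification go through.
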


In Section \ref{sec: run-refinement}, we prove the following generalization. It is clear that our formula in Proposition \ref{prop:3_runs} gives the expression in Conjecture \ref{conj: buck-3-runs} when $k=2$.

\begin{proposition} \label{prop:3_runs}
    For $n\ge 0$ and $k\ge 1$,
\begin{eqnarray*}
    \vert \flt_3(\stir_n^k) \vert
&=& \frac{(k-1)(k-2)}{2}(3^{n-1}-2^n+1)  +  \frac{k(k-1)}{12}(3^n-6\times 2^{n}+6n+3)\\  &+&  k(k-1)\sum_{i=1}^{n-3}\binom{n-1}{i}\sum_{j=2}^{n-1-i}\binom{n-1-i}{j}  +  \frac{k^2}{2}\sum_{i=2}^{n-3}\binom{n-1}{i}\sum_{j=2}^{n-1-i}\binom{n-1-i}{j}
\end{eqnarray*}
\end{proposition}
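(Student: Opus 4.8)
The plan is to read the formula off as the coefficient of $t^2$ in the exponential generating function of Theorem \ref{thm: main-egf}. The key observation is that the maximal weakly increasing runs of a flattened $k$-Stirling permutation are separated precisely by its descents, so a permutation has exactly three runs if and only if it has exactly two descents. Consequently
\[
\vert \flt_3(\stir_n^k)\vert = [t^2] A^{\flt}_{n,k}(t),
\]
and since the egf records $A^{\flt}_{n+1,k}(t)$ against $z^n/n!$, it suffices to isolate the $t^2$-part of the right-hand side of Theorem \ref{thm: main-egf} and then extract the coefficient of $z^{n-1}/(n-1)!$.

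First I would expand the two factors to order $t^2$. Writing $\tfrac{k!}{(k-1)!}=k$ and $\tfrac{k!}{(k-2)!}=k(k-1)$, one has
\[
(t(e^z-1)+1)^{k-1}=1+(k-1)(e^z-1)t+\tbinom{k-1}{2}(e^z-1)^2t^2+O(t^3),
\]
\[
\exp\Big(z+kH_1t+k(k-1)H_2t^2+\cdots\Big)=e^z\Big(1+kH_1t+\big(k(k-1)H_2+\tfrac{k^2}{2}H_1^2\big)t^2\Big)+O(t^3).
\]
Multiplying these and collecting the coefficient of $t^2$ yields
\[
\sum_{n\ge 0}[t^2]A^{\flt}_{n+1,k}(t)\,\frac{z^n}{n!}
=e^z\Big(\tfrac{(k-1)(k-2)}{2}(e^z-1)^2+k(k-1)(e^z-1)H_1+\tfrac{k^2}{2}H_1^2+k(k-1)H_2\Big),
\]
whose four summands already carry exactly the four coefficients appearing in the statement.

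Next I would extract the coefficient of $z^{n-1}/(n-1)!$ from each summand, using $H_1(z)=e^z-z-1$ and $H_2(z)=\tfrac14 e^{2z}-e^z+\tfrac{z}{2}+\tfrac34$, which follow from $S(m,1)=1$ and $S(m,2)=2^{m-1}-1$ for $m\ge 1$. The first and last summands simplify in closed form: since $e^z(e^z-1)^2=e^{3z}-2e^{2z}+e^z$ its coefficient is $3^{n-1}-2^n+1$, and the analogous computation for $e^zH_2$ gives $\tfrac{1}{12}(3^n-6\cdot2^n+6n+3)$. For the two middle summands I would instead use the labelled-structure reading of the factors: $e^z(e^z-1)H_1$ and $e^zH_1^2$ are the egfs of ordered triples $(C,A,B)$ partitioning $[n-1]$, where $e^z-1$ forces $|A|\ge1$, a factor $H_1$ forces the corresponding block to have size $\ge2$, and $C$ is arbitrary. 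This produces exactly the double binomial sums $\sum_i\binom{n-1}{i}\sum_j\binom{n-1-i}{j}$ with ranges $i\ge1,\,j\ge2$ and $i,j\ge2$ respectively, the upper cut-off at $n-3$ coming from the minimum-size constraints. Assembling the four contributions with their coefficients then yields the claimed formula.

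The main obstacle is the bookkeeping in the $e^zH_2$ term: one must carry the low-order ($z$ and constant) corrections in the closed form of $H_2$ accurately, since these are precisely what generate the linear term $6n$ and the constant $3$ inside $\tfrac{1}{12}(3^n-6\cdot2^n+6n+3)$, and a careless use of $S(n-1,2)=2^{n-2}-1$ outside its range of validity would corrupt them. A secondary point to check is that the summation ranges (e.g.\ $i\le n-3$) are exactly as stated, and finally that specializing to $k=2$ recovers Conjecture \ref{conj: buck-3-runs}.
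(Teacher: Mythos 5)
Your proposal is correct, but it takes a genuinely different route from the paper's. The paper proves Proposition \ref{prop:3_runs} by direct combinatorial counting on the partition side of the bijection $\Phi$: since descents of the image word correspond to distinct colors among the non-minimum elements of a block, it classifies the good $k$-colored partitions producing two descents into four cases (one non-singleton block containing ``1'', one not containing ``1'', two non-singleton blocks with one containing ``1'', two with neither containing ``1''), counts each by double binomial sums, and simplifies the two single-block cases in closed form. You instead derive the formula by extracting $[t^2]$ and then $[z^{n-1}/(n-1)!]$ from the egf of Theorem \ref{thm: main-egf}; your expansions check out, and in fact your four summands $\tfrac{(k-1)(k-2)}{2}e^z(e^z-1)^2$, $k(k-1)e^z(e^z-1)H_1$, $\tfrac{k^2}{2}e^zH_1^2$ and $k(k-1)e^zH_2$ correspond exactly (in that order) to the paper's Cases 1, 3, 4 and 2, including the closed forms $3^{n-1}-2^n+1$ and $\tfrac{1}{12}(3^n-6\cdot 2^n+6n+3)$ and the cut-offs $i\le n-3$. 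Your argument is not circular --- the paper's proof of Theorem \ref{thm: main-egf} rests on Lemma \ref{lemma: descent-color} and weighted species and nowhere invokes Proposition \ref{prop:3_runs} --- but it does make the proposition depend on the heaviest result in the paper, whereas the paper's argument is self-contained given the bijection; in exchange, your method generalizes mechanically to $\vert\flt_s(\stir_n^k)\vert$ for arbitrary $s$ by extracting $[t^{s-1}]$, which the case analysis does not. One small caveat: your coefficient extraction only makes sense for $n\ge 1$, but this matches the true range of validity --- at $n=0$ the stated formula evaluates to $-\tfrac{k-1}{3}$ rather than $0$ because of the $3^{n-1}$ term, an edge-case defect of the statement itself that affects the paper's proof equally.
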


In {\cite[Open Problem 2]{buck2023flattened}}, the authors ask for the number of flattened $2$-Stirling permutations that have maximum number of runs. Denote by $\vert \flt_{\max}(\stir_n^2) \vert$ the number of flattened $2$-Stirling permutations that have maximum number of runs. In Section \ref{sec: run-refinement}, we prove the following proposition.

\begin{proposition} \label{max_runs} For integers $n\ge 0, k\ge 0$, 
    \begin{displaymath}
    \vert \flt_{\max}(\stir_n^2) \vert =
		\begin{cases}
            \frac{(3k+1)!}{k!3^k} & \mbox{if } n=3k+1,3k+2\\
            \\
            \frac{(9k+10)}{4}\times \frac{(3k+2)!}{k!3^k} & \mbox{if } n=3k+3
		\end{cases}
	\end{displaymath}
	 
\end{proposition}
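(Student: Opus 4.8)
The plan is to recognize that $|\flt_{\max}(\stir^2_n)|$ is exactly the leading coefficient of the descent polynomial $A^{\flt}_{n,2}(t)$: since a run is one more than the number of descents of the underlying flattened permutation, maximizing runs is the same as maximizing descents, and the extremal count is the top-degree coefficient of $A^{\flt}_{n,2}(t)$. I would therefore start from Corollary \ref{thm:stirling} and write its bivariate egf in the factored form
\[
G(z,t):=\sum_{n\ge 0}A^{\flt}_{n+1,2}(t)\frac{z^{n}}{n!}=\bigl(t(e^z-1)+1\bigr)\,e^{z}\exp\bigl(tP(z)\bigr)\exp\bigl(t^{2}Q(z)\bigr),
\]
where $P(z)=2(e^z-z-1)=z^{2}+O(z^{3})$ and $Q(z)=\tfrac12(3+2z-4e^z+e^{2z})=\tfrac{z^{3}}{3}+O(z^{4})$, so that $A^{\flt}_{n,2}(t)=(n-1)!\,[z^{n-1}]G$. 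The structural fact driving everything is the order of vanishing in $z$: the prefactor $t(e^z-1)+1$ contributes at most one $t$ at $z$-degree $\ge 1$, each factor $P$ contributes one $t$ at degree $\ge 2$, each factor $Q$ contributes two $t$'s at degree $\ge 3$, and the $e^z$ factor contributes degree $\ge 0$ carrying no $t$.

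First I would determine the degree $d_{\max}(n):=\deg_t A^{\flt}_{n,2}(t)$. Recording a monomial of $G$ by whether the prefactor is used ($\epsilon\in\{0,1\}$) and by the numbers $a,b$ of $P$- and $Q$-factors, its $t$-degree is $a+2b+\epsilon$ and its $z$-degree is at least $2a+3b+\epsilon$. Hence $d_{\max}(n)$ is the value of the integer program $\max\,(a+2b+\epsilon)$ subject to $2a+3b+\epsilon\le n-1$, which solves to $d_{\max}(n)=2k$ for $n=3k+1$ and $d_{\max}(n)=2k+1$ for $n=3k+2$ or $n=3k+3$. Because the lowest coefficients of $P$ and $Q$ (namely $1$ and $\tfrac13$), and those of $e^z$ and the prefactor, are all positive, there is no cancellation and the leading coefficient is genuinely nonzero, so this is the exact degree and the three residue classes modulo $3$ emerge naturally.

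Next I would extract that leading coefficient, i.e. compute $(n-1)!\,[z^{n-1}t^{d_{\max}(n)}]\,G$, by enumerating, in each residue class, the finitely many triples $(\epsilon,a,b)$ with $a+2b+\epsilon=d_{\max}(n)$ whose minimal $z$-degree is $\le n-1$. For $n=3k+1$ the only admissible triples are $(0,0,k)$ and $(1,1,k-1)$, each of minimal $z$-degree exactly $3k$; reading off the leading coefficients of $P,Q,e^z$ gives $\tfrac{1}{k!3^{k}}+\tfrac{1}{(k-1)!3^{k-1}}=\tfrac{3k+1}{k!3^{k}}$, and multiplying by $(3k)!$ yields $\tfrac{(3k+1)!}{k!3^{k}}$. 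For $n=3k+2$ the unique admissible triple is $(1,0,k)$, again of minimal $z$-degree equal to the target, giving $\tfrac{(3k+1)!}{k!3^{k}}$ once more.

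The hard part will be the case $n=3k+3$, where the admissible triples are $(0,1,k)$, $(1,2,k-1)$ and $(1,0,k)$: for the last of these the minimal $z$-degree $3k+1$ is strictly below the target $3k+2$, so I must also collect the contributions obtained by padding one extra power of $z$ through the subleading coefficients of $e^{z},\,e^{2z}$ and of $Q^{k}$. This is where care is needed, forcing me to compute $[z^{4}]Q=\tfrac14$ (hence $[z^{3k+1}]Q^{k}=\tfrac{k}{4\cdot3^{\,k-1}}$) together with $[z^{2}](e^{2z}-e^{z})=\tfrac32$, and to verify that no padded term is double counted or dropped. Summing the three triples gives $\tfrac{9k+10}{4\,k!\,3^{k}}$, and multiplying by $(3k+2)!$ produces $\tfrac{9k+10}{4}\cdot\tfrac{(3k+2)!}{k!3^{k}}$, as claimed. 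I expect this padding bookkeeping in the $n\equiv0\pmod 3$ case to be the only delicate point; everything else is a matter of reading off leading coefficients from the factored egf, and I would sanity-check the three formulae against the small orders $n=1,2,3$, which must give $1,1,5$.
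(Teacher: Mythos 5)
Your proposal is correct, and I verified the key numbers: the admissible triples $(\epsilon,a,b)$ you list in each residue class are exactly right, the leading coefficients combine to $\tfrac{3k+1}{k!3^k}$ in the first two cases, and in the $n=3k+3$ case the four contributions $1+\tfrac{3k}{2}+\tfrac{3}{2}+\tfrac{3k}{4}=\tfrac{9k+10}{4}$ (using $[z^4]Q=\tfrac14$, $[z^{3k+1}]Q^k=\tfrac{k}{4\cdot 3^{k-1}}$, and $[z^2](e^{2z}-e^z)=\tfrac32$, which neatly bundles the two padding options from the prefactor and the $e^z$ factor) reproduce the stated formula; the positivity of all coefficients of $P$, $Q$ and $e^z$ rules out cancellation, so the degree claim is also sound. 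However, your route is genuinely different from the paper's. The paper argues combinatorially through the bijection $\Phi$ with good $2$-colored (type $B$) partitions: it shows each block contributes at most $2$ descents, that maximality forces almost all blocks to be saturated $3$-sets with a tightly constrained number of singletons and doubletons, and then counts the finitely many admissible block configurations with multinomial coefficients. That argument is self-contained within Section 5 and exposes the structure of the extremal permutations, at the cost of intricate casework. Your argument instead extracts the top $t$-coefficient of $A^{\flt}_{n,2}(t)$ from the factored egf of Corollary \ref{thm:stirling}, reducing everything to a small integer program plus bookkeeping of subleading coefficients; this is mechanically cleaner and would extend in principle to leading coefficients for general $k$, but it rests on Theorem \ref{thm: main-egf}, which the paper proves only in the following section (there is no circularity, since that theorem's proof via weighted species does not use Proposition \ref{max_runs}, but the logical dependence should be stated explicitly). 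One further point to make rigorous in a final write-up: when the minimal $z$-degree of a triple equals the target degree you must justify that every factor, including $e^z$, is pinned to its minimal-degree term, which is immediate but worth saying.
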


\section{Preliminaries and Notation} \label{section: prelim}
We divide the preliminaries into three subsections. The first and second subsections briefly cover the relevant material on non-weighted and weighted labelled structures required in this paper. The third section defines $(c,m)$-colored $B_n$ partitions introduced by Wang in \cite{wang-colored-set} and the modification that needs to be done to make it bijective with flattened $k$-Stirling permutations of order $n$.  
\subsection{Labelled structures}

Much of the terminology in this section is borrowed from \cite{sagan-art-count}. A labelled structure is a function $\mathscr{S}$ that assigns to each finite set $L$ a finite set $\mathscr{S}(L)$ such that
$$\#L=\#M \implies \#\mathscr{S}(L)=\#\mathscr{S}(M).$$

We call $L$ the label set and $\mathscr{S}(L)$ the set of structures on $L$. We let 
$$s_n=\#\mathscr{S}(L)$$
for any $L$ of cardinality $n$, and this is well defined. We have the corresponding egf to a structure
$$\mathcal{F}_{\mathscr{S}}=\displaystyle\sum_{n\ge 0} s_n\frac{z^n}{n!}.$$

To a structure $\mathcal{A}$, the related structure $\mathcal{\overline{A}}$ is defined by 
\begin{displaymath}
\mathcal{\overline{A}}(L) =
	\begin{cases}
		\mathcal{A}(L)  & \mbox{if } L \neq \phi \\
		\phi & \mbox{if } L = \phi.
	\end{cases}
\end{displaymath}

If there are two structures $\mathcal{A,B}$, then the product structure $(\mathcal A\times \mathcal B)$ is the following.
\begin{equation*}
(\mathcal A\times \mathcal B)(L)=\lbrace (A,B) \vert A\in \mathcal A(L_1), B\in \mathcal B(L_2)\ \text{ with }  (L_1,L_2) \text{ is a weak composition of } L \rbrace.
\end{equation*}

Similarly, their composition, $\mathcal{A} \circ \mathcal{B}$ is a structure such that
\begin{multline*}
    (\mathcal{A}\circ\mathcal{B})(L)=\{ (\{B_1,B_2,\dots\},A) \vert \text{ for all } L_1\vert L_2 \vert \dots \vdash L \\
    \text{ with } B_i\in \mathcal{B}(L_i) \text{ for all } i \text{ and } A\in \mathcal{A}(\{ B_1,B_2,\dots \} )  \}
\end{multline*}

We recall two important rules for the labelled structures:
\begin{enumerate}
    \item $\mathcal{F}_{\mathcal{A}\times \mathcal{B}}(z)=\mathcal{F}_{\mathcal{A}}(z)\times \mathcal{F}_{\mathcal{B}}(z)$.
    \item $\mathcal{F}_{\mathcal{A}\circ \mathcal{B}}(z)=\mathcal{F}_{\mathcal{A}}(\mathcal{F}_{\mathcal{\overline{B}}}(z))$.
\end{enumerate}

\subsection{Weighted Species/Weighted labelled structures}
We direct the reader to Section $2.3$ of the book \cite{species-bergeron} by Bergeron, Labelle and Leroux. To keep this paper self-sufficient, we give the relevant material here. Let $\mathbb K \subset \mathbb C$ be an integral domain (for example $\mathbb Z,\mathbb R$ or $\mathbb C$) and $\mathcal{R}$, a ring of formal power series
in an arbitrary number of variables, with coefficients in $\mathbb K$. 
\begin{definition}[{\cite[Definition 2]{species-bergeron}}]
     An $\mathcal{R}$-weighted set is a pair $(A, w)$, where $A$ is a (finite or infinite) set and
$w : A \rightarrow \mathcal R$ is a function which associates a weight $w(\alpha) \in \mathbb R$ with each element $\alpha \in A$. If the
following sum, denoted by $|A|_w$, exists, the weighted set $(A, w)$ is said to be summable and $|A|_w$ is called the inventory (or total weight or cardinality) of
the weighted set $(A, w)$: $|A|_w =\sum_{\alpha\in A} w(\alpha)$.
\end{definition}

\begin{definition}[{\cite[Definition 3]{species-bergeron}}]
    Let $(A, w)$ and $(B, v)$ be $\mathcal R$-weighted sets. A morphism of $\mathcal R$-weighted sets
$f : (A, w) \rightarrow (B, v)$, is a function $f : A \rightarrow B$ compatible with the weighting (one also says that
the function $f$ is weight preserving), that is to say, such that $w = v \circ f$. Moreover, if $f$ is a
bijection, $f$ is called an isomorphism of weighted sets and we write $(A, w) \cong (B, v)$. Observe that
$(A, w) \cong (B, v) \implies |A|_w = |B|_v$ .
\end{definition}

\begin{definition}[{\cite[Definition 6]{species-bergeron}}]
     Let $\mathcal R$ be a ring of formal power series or of polynomials over a ring $\mathbb K \subset \mathbb C$. An
$\mathcal R$-weighted species is a rule $F$ which produces\\
i) for each finite set $U$, a finite or summable $\mathcal R$-weighted set $(F[U], w_U )$,\\
ii) for each bijection $\sigma : U \rightarrow V$ , a function $F[\sigma] : (F[U], w_U )\rightarrow (F[V ], w_V )$ preserving the
weights (i.e., a weighted set morphism).\\
Moreover, the functions $F[\sigma]$ must satisfy the following functoriality properties:\\
a) if $\sigma : U \rightarrow V$ and $\tau : V\rightarrow W$ are bijections, then $F[\tau \circ \sigma] = F[\tau ] \circ F[\sigma]$,\\
b) for each set $U$, if $\mathrm{Id}_U$ denotes the identity bijection of $U$ to $U$, then $F[\mathrm{Id}_U ] = \mathrm{Id}_{F[U]}$.\\
As before, an element $s \in F[U]$ is called an $F$-structure on $U$, and the function $F[\sigma]$, the transport of $F$-structures along $\sigma$.
\end{definition}

\begin{definition}[{\cite[Definition 8]{species-bergeron}}]
   Let $F = F_w$ be an $\mathcal R$-weighted species of structures. The generating series of $F$ is the exponential formal power series $F_w(x)$ with coefficients in $\mathcal R$ defined by
$$F_w(x) = \displaystyle \sum_{n\ge 0} |F[n]|_w\frac{x^n}{n!},$$
where $|F[n]|_w$ is the inventory of the set of F-structures on $[n]$.
\end{definition}

The product and composition structures are defined for weighted structures just like the non-weighted structures.

\begin{proposition}[{\cite[Proposition 11]{species-bergeron}}]
    Let $F_w$ and $G_v$ be two $\mathcal R$-weighted structures. Then
\begin{enumerate}
    \item $(F_w\times G_v)(x)=F_w(x)G_v(x)$
    \item $ (F_w\circ G_v)(x)=F_w(\overline{G_v}(x)) $
\end{enumerate}
\end{proposition}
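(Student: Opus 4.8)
The plan is to prove both identities by computing the generating series directly from its definition, expanding the inventory of the composite structures over the underlying combinatorial data, and then recognizing the resulting sums as a Cauchy product (for the product rule) and as a power-series substitution (for the composition rule). Throughout, the two facts I will lean on are that the weight of a composite structure is, by construction, the product of the weights of its constituent pieces, and that by the weighted-species isomorphism axioms the inventory $|F[U]|_w$ depends only on $\#U$.

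For the product rule, I would start from the definition of an $(F_w \times G_v)$-structure on $[n]$ as a pair $(f,g)$ with $f \in F[U_1]$, $g \in G[U_2]$ for a weak composition $(U_1,U_2)$ of $[n]$, the weight of $(f,g)$ being $w(f)\,v(g)$. Splitting $[n]$ into an ordered pair of blocks of sizes $i$ and $n-i$ can be done in $\binom{n}{i}$ ways, and each choice contributes inventory $|F[i]|_w\,|G[n-i]|_v$ independently of which labels land in which block. Hence $|(F_w\times G_v)[n]|_w = \sum_{i=0}^n \binom{n}{i}\,|F[i]|_w\,|G[n-i]|_v$. Dividing by $n!$, absorbing the binomial coefficient into the two factorials, and summing over $n$ turns this into the Cauchy product of the two exponential generating series, which is exactly $F_w(x)\,G_v(x)$.

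For the composition rule, an $(F_w\circ G_v)$-structure on $[n]$ consists of an unordered partition $\{B_1,\dots,B_k\}$ of $[n]$ into nonempty blocks, a $G$-structure on each block, and an $F$-structure on the $k$-element set of blocks, with weight equal to the product of the $F$-weight and all the $G$-weights. I would group these by the number of blocks $k$. The inventory of $F$-structures on the set of blocks equals $|F[k]|_w$ by transport along a relabeling bijection to $[k]$; the inventory of $G$-structures on a block of size $m\ge 1$ equals $|G[m]|_v$, with no contribution from empty blocks, which is precisely what replacing $G_v$ by $\overline{G_v}$ records. Passing from unordered to ordered tuples of blocks introduces the symmetry factor $1/k!$, after which the ordered sum over block sizes $n_1+\cdots+n_k=n$ with each $n_i\ge 1$, weighted by multinomial coefficients, factors as the $k$-th power of $\overline{G_v}(x)$. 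Summing $\sum_{k\ge 0}\frac{|F[k]|_w}{k!}\,(\overline{G_v}(x))^k$ then recognizes the result as the substitution $F_w(\overline{G_v}(x))$.

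I expect the composition identity to be the main obstacle. The delicate bookkeeping is the justification of the $1/k!$ symmetry factor when converting an unordered set partition into ordered tuples of blocks, together with the verification that the multinomial coefficient arising from distributing the $n$ labels among the $k$ ordered blocks is exactly what lets the sum factor into a $k$-th power of an egf. The second subtlety is genuinely species-theoretic rather than a numerical egf manipulation: one must invoke the functoriality and weight-preservation axioms to assert both that $|F[\{B_1,\dots,B_k\}]|_w$ depends only on $k$ and that the total weight of a composite structure factors as the product of the weights of its parts, so that the inventory of the composition decouples into the product over blocks that drives the whole computation.
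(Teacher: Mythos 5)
Your proof is correct, but note that the paper itself offers no proof of this statement: it is quoted verbatim as Proposition 11 of Bergeron--Labelle--Leroux \cite{species-bergeron}, so there is no internal argument to compare against. Your computation --- reducing the product inventory to a Cauchy product via the binomial splitting of $[n]$, and the composition inventory to $\sum_{k\ge 0}\frac{|F[k]|_w}{k!}\,\bigl(\overline{G_v}(x)\bigr)^k$ via the $1/k!$ unordered-to-ordered conversion and the multinomial factorization, with the weight-preservation and transport axioms justifying that inventories depend only on cardinality --- is exactly the standard argument found in the cited source, and your handling of the two genuine subtleties (the $k!$ factor is valid because blocks of a partition are distinct sets, and $\overline{G_v}$ encodes the exclusion of empty blocks so the substitution has zero constant term) is sound.
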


\subsection{Colored set partitions} 
A \textit{partition} of a set $T$ is a set $\rho$ of nonempty subsets $B_1,B_2,\dots,B_r$ of $T$ such that $\displaystyle T= \uplus_{1\le i\le s}B_i$, which we write as $\rho \vdash T$.  Here, $\uplus$ denotes a disjoint union.
The subsets $B_i$ will be called \textit{blocks}. Sometimes, we write $\rho=B_1\vert\dots\vert B_r$ to mean that $\rho$ is a set partition of $T$ into the blocks $B_1\vert\dots\vert B_r$.

A type $B_n$-partition $\pi$ is a partition of $[-n,n]$ such that if $B$ is a block of $\pi$, then $-B$ is also a block of $\pi$ and $\pi$ contains at most one block such that $B=-B$. If such a block exists, it is called the zero block.    

Wang, in \cite{wang-colored-set}, generalizes type $B_n$- partitions by coloring the elements as opposed to just assigning signs. Let $n,m,c$ be positive integers. Let $\pi$ be a partition of the set $\{0 \}\cup [n]$. Let $C_1$ be a list of $c$ colors and $C_2$ be a list of $m$ colors. For any $x\in \{0 \}\cup [n]$, let $b_x$ be the block of $\pi$ containing $x$. Then, $B_0$ is the zero-block. For any $x\in [n]$, we color $x$ in the first color in $C_2$ if $x$ is the minimum element of the block, some color in $C_1$ if it is not minimum but in the zero block and some color in $C_2$ if it is not minimum and not in the zero block.
Therefore, every element $x$ can be regarded as a pair $(x,cl)$ where $cl$ denotes the color of $x$. Such a partition is called a $(c,m)$-colored $B_n$-partition.
Instead of considering partitions of the set $\{ 0\}\cup [n]$, we will just consider the partitions of $[n]$.

\begin{definition}
    Let $\rho$ be a partition of $[n]$. Let $C=\lbrace c_1,\dots,c_k \rbrace$ be a list of $k$ colors and $w$ be a word on the alphabet $[k]$. We define a \textit{$k$-colored partition} $\rho_w$ of $[n]$ to be a pair $(\rho,w)$ where $x \in [n]$ is colored with the color $c_{w(x)}$, i.e., the word $w$ keeps track of the coloring of $\rho$.  
\end{definition}
For example, if $[3]=\lbrace 1,2,3 \rbrace$ and $\rho=13\vert2$, a $3$-colored partition is $\rho_w=(13\vert2,133)$. We will denote the set of $k$-colored partitions of $[n]$ by $\mathrm{CP}_k(n)$.
Next, we will describe below the \textit{standard block notation} of a $k$-colored partition of $[n]$.

\begin{definition}
     Let $(\rho,w)$ be a $k$-colored partition of $[n]$ where $\rho=B_1\vert\cdots\vert B_r$ and $w=w(1)\dots w(n)$.
    \begin{enumerate}
        \item Relabel the sets $B_1,\dots, B_r$ such that $\min(B_i)<\min(B_j)$ for $1\le i<j\le r$. 
        \item Let $B_i=\lbrace x_{1}<x_{2}<\dots<x_{l} \rbrace$. Then, form the colored word $\pi_i=(x_{1},c_{w(x_1)})\dots(x_{l},c_{w(x_l)})$. Instead of writing a pair, it is convenient to represent $(x_j,c_{w(x_j)})$ by just ${x_j}_{w(x_j)}$. Therefore, the colored word $\pi_i$ will be denoted by ${x_1}_{w(x_1)}\dots {x_l}_{w(x_l)}$.
        \item Juxtapose the colored words $\pi_i$ and place a bar between $\pi_j$ and $\pi_{j+1}$ for $1\le j \le r-1$ and form the barred colored word $$\pi=\pi_1\vert \pi_2\vert \dots \vert \pi_r.$$
        \item This barred colored word $\pi$ is called the standard block notation of $(\rho,w)$.    
    \end{enumerate}
\end{definition}
The subwords $\pi_i$, by abuse of notation, will be called blocks of the $k$-colored partition and from the context, it will be apparent that a block is a colored word when talking about $k$-colored partitions and ordinary sets when talking about set partitions.

\begin{definition}
Let $\rho_w$ be a $k$-colored partition of $[n]$ written in standard block notation. Call $\rho_w$ a \textit{good $k$-colored partition} if it satisfies the following additional rules:
 \begin{enumerate}
     \item[Rule 1:] The minimum element of each block has the color $c_1$ or equivalently, subscript $1$.
     \item[Rule 2:] The elements of the first block, i.e., the block containing the element $``1"$, are colored in one of $\lbrace c_1,\dots, c_{k-1}\rbrace $, or equivalently, the subscript of $x \in B_1$ is bounded above by $k-1$.
 \end{enumerate}
 Define $\mathrm{GCP}_k(n)$ to be the set of good $k$-colored partitions of $[n]$.
 \end{definition}
 \begin{example}
  For example, $\rho_w=1_12_34_2\vert 3_1 \vert5_1$ is a good $4$-colored partition of $[5]$ but not a good $3$-colored partition because the index of the element $``2"$ in the first block is larger than $2$.   
 \end{example}

 \begin{remark}
     Some clarification is required when $k=1$. When $k=1$,  we have $k-1=0$ and so, there is no way to color any elements of the first block other than $``1"$. Therefore, we will assume that the first block only contains $``1"$.
 \end{remark}

\begin{remark}
    The good $k$-colored partitions of $[n]$ are in bijection with $(k-1,k)$-colored type $B_{n-1}$ partitions of Wang. To see this, take a $(k-1,k)$-colored $B_{n-1}$ partition and apply the map $f((x,cl(x)))=(x+1,cl(x))$. The map is a bijection and its image is a good $k$-colored partition of $[n]$. This implies that good $2$-colored partitions are in bijection with the type $B$ partitions of $[-n,n]$.
\end{remark}

\begin{definition}
    A block $B$ of a $k$-indexed partition is called \textit{saturated} if $\vert B\vert=k+1$ and for each $1\le i \le k$, there exists a non-minimum element whose color is $c_i$. 
\end{definition}

\begin{proposition}\label{prop:goodindexnumber} For $n\ge 0$, we have
\begin{equation}
    \vert \mathrm{GCP}_k(n+1) \vert=\sum_{i=0}^n \binom{n}{i} (k-1)^{i}\sum_{r=0}^{n-i} k^{n-i-r} S(n-i,r)
\end{equation} where $S(a,b)$ denotes the Stirling number of the second kind with $S(a,0)=0$ for all $a\ge 0$.
\end{proposition}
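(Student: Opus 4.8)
The plan is to enumerate $\mathrm{GCP}_k(n+1)$ by peeling off the block $B_1$ that contains the letter $1$; in standard block notation this is always the first block, since $1$ is the global minimum of $[n+1]$. First I would argue that the map sending a good $k$-colored partition to the pair consisting of (i) the colored first block $B_1$ and (ii) the colored partition induced on the remaining letters $[n+1]\setminus B_1$ is a bijection onto the pairs in which $B_1\ni 1$ obeys Rules 1 and 2 while the remaining blocks obey Rule 1 alone. The inverse merges any admissible first block with any admissible Rule 1 structure on the complementary letters and re-sorts the blocks by their minima; because $1$ stays the global minimum, $B_1$ remains the first block, so no admissibility is gained or lost. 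This reduces the count to enumerating the two factors separately and combining them.

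Next I would count the first block. Its minimum $1$ is forced to the color $c_1$ by Rule 1 and so contributes a single choice. Choosing which $i$ of the $n$ letters $2,\dots,n+1$ join $B_1$ costs $\binom{n}{i}$, and by Rule 2 each such non-minimum letter of the first block may be colored with any of $c_1,\dots,c_{k-1}$, contributing $(k-1)^i$. Thus the first block accounts for the factor $\binom{n}{i}(k-1)^i$, to be summed over $0\le i\le n$.

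For the remaining $m:=n-i$ letters the induced object is a $k$-colored partition constrained only by Rule 1: the minimum of each block takes color $c_1$, while every non-minimum letter is free to take any of the $k$ colors $c_1,\dots,c_k$. Partitioning these $m$ letters into $r$ blocks can be done in $S(m,r)$ ways, each leaving $m-r$ non-minimum letters and hence $k^{m-r}$ colorings; summing on $r$ gives $\sum_{r=0}^{m}k^{m-r}S(m,r)$. Multiplying the first-block factor by this count and summing over $i$ produces exactly the stated double sum.

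The genuine content of the argument, and the step I would be most careful about, is verifying that the two coloring rules decouple across the split, i.e. that Rule 2 constrains the first block only while Rule 1 applies uniformly to all blocks, so that the colorings of $B_1$ and of the other blocks may be chosen independently. The one delicate point of bookkeeping is the extreme $i=n$, where the complementary set is empty and should contribute the single empty colored partition; this is the value $(k-1)^n$, which the inner sum returns precisely when $S(0,0)$ is read as $1$. As a consistency check, at $k=1$ the factor $(k-1)^i=0^i$ forces $i=0$, recovering the convention that for $k=1$ the first block is $\{1\}$, and the inner sum collapses to $\sum_r S(n,r)$, the Bell number; moreover the exponential generating function of the resulting counts factors as $\exp((k-1)z)\cdot\exp\!\big((e^{kz}-1)/k\big)$, agreeing with $F_k(z)$ from Theorem \ref{thm:egf}.
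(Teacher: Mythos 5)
Your proof is correct and follows essentially the same route as the paper's: condition on the size $i$ of the block containing $1$, choose and color its non-minimum elements in $\binom{n}{i}(k-1)^i$ ways, then partition and color the remaining $n-i$ letters in $\sum_{r} k^{n-i-r}S(n-i,r)$ ways. Your observation that the $i=n$ term only comes out right when $S(0,0)$ is read as $1$ (despite the stated convention $S(a,0)=0$ for all $a\ge 0$) is a valid and worthwhile sharpening of the bookkeeping that the paper glosses over.
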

\begin{proof}
    Suppose the first block has $i+1$ elements, then we can choose the $i$ elements other than the element $``1"$ in $\binom{n}{i}$ ways. These $i$ elements can be colored in $(k-1)^i$ ways.
    The remaining $n-i$ elements can be partitioned into $r$ parts in $S(n-i,r)$ ways. Since the elements other than the minimum elements in each block can be colored in $k$ ways, we have the factor $k^{n-i-r}$. Summing these appropriately gives us our equation.  
\end{proof}
Notice that RHS of \eqref{prop:goodindexnumber} is the same as the RHS of \eqref{thm:bij_rec}, which brings us to the following result.
\begin{theorem}\label{thm:bijective}
    There is a bijection from $\flt(\stir_n^k)$, the set of flattened $k$-Stirling permutations to the set, $\mathrm{GCP}_k(n)$, of good $k$-colored partitions of $[n]$.
\end{theorem}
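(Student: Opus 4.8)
The plan is to prove Theorem~\ref{thm:bijective} by exhibiting an explicit map $\Phi\colon \flt(\stir_n^k)\to \mathrm{GCP}_k(n)$ together with an explicit inverse, so that bijectivity and the equality $|\flt(\stir_n^k)|=|\mathrm{GCP}_k(n)|$ are obtained simultaneously; combined with Proposition~\ref{prop:goodindexnumber} this also reproves the formula in Theorem~\ref{thm:bij_rec}. I would begin with a structural lemma collecting the features of a flattened $k$-Stirling permutation $W$ that the map reads off. First, $W$ must begin with $1$: the leftmost occurrence of $1$ cannot be preceded by a smaller entry inside a weakly increasing run, so it is a run-leader, and since the run-leaders weakly increase and $1$ is minimal, it is the first entry. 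Second, the Stirling condition forces the position-sets spanned by the $k$ copies of the various values to form a \emph{laminar} (nested, non-crossing) family; in particular the $k$ copies of the largest value are adjacent, and more generally each value $v$ has $k-1$ internal gaps into which only larger values may nest. The flattened hypothesis is exactly that the leaders of the maximal weakly increasing runs form a weakly increasing sequence, and I would record how this pins down, run by run, where the copies of a given value are permitted to reappear.

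Next I would define $\Phi(W)=(\rho,w)$. The set partition $\rho$ of $[n]$ groups the values according to the run/nesting structure: the block containing $1$ consists of $1$ together with all values nested at any depth inside the gaps of $1$, while the remaining blocks group the top-level (non-nested) values according to the descent pattern between consecutive runs, each block acquiring a well-defined minimum. The colouring $w$ records, for every non-minimal element $x$ of a block, its \emph{type}: whether $x$ sits at top level or is nested, in the latter case which of the $k-1$ gaps of the value directly enclosing it receives $x$, together with the pattern of the copies of $x$ across the runs. One assigns colours so that the minimum of every block receives $c_1$ (Rule~1), and so that, because $1$ offers only its $k-1$ genuine gaps and no top-level slot, the non-minimal elements of the first block use only $c_1,\dots,c_{k-1}$ (Rule~2). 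Verifying that $\Phi(W)$ is always a \emph{good} $k$-coloured partition is then a direct check against these two rules.

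For the inverse $\Psi$ I would reconstruct $W$ from $(\rho,w)$ by processing values from largest to smallest, inserting each block $x^k$ into the position forced by (i) the block of $x$ and the minimum of that block, (ii) the colour of $x$, which names the enclosing value and the gap (or the top-level slot) into which $x^k$ must go, and (iii) the weakly-increasing-leader constraint, which fixes the left-to-right order in which the resulting runs appear. The substance is to prove that each insertion is \emph{forced}, so that $\Psi$ is single-valued, and then that $\Psi\circ\Phi=\id$ and $\Phi\circ\Psi=\id$.

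I expect the main obstacle to be exactly the faithful encoding of order. A good $k$-coloured partition is unordered, whereas $W$ is a linear word, so the delicate point is to show that the block-and-colour data rigidly determine the interleaving of the runs; otherwise two distinct flattened permutations could collapse to one image, as nearly happens for permutations whose top-level values share the same nesting pattern but are threaded through the runs differently. Overcoming this requires using the flattened hypothesis essentially: I would argue that, for fixed blocks and colours, the requirement that the run-leaders weakly increase admits a \emph{unique} compatible placement of each value's copies, so that the plane order is recoverable from $(\rho,w)$. Establishing this uniqueness, and with it the well-definedness of $\Psi$, is the crux; once it is in hand the mutual-inverse identities are routine, and the resulting bijection together with Proposition~\ref{prop:goodindexnumber} completes the proof.
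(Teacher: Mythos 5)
Your proposal constructs exactly the bijection the paper uses, only read in the opposite direction: the paper defines $\Phi$ from good $k$-colored partitions to flattened $k$-Stirling permutations by inserting the $k$ adjacent copies of each non-minimal block element into the gap of its block minimum named by its color (color $c_k$ meaning the slot before the minimum's leftmost copy, which Rule 2 forbids in the first block), and then inverts it by precisely your right-to-left scan that strips off the suffix following the rightmost letter smaller than the final letter. Your structural observations (word starts with $1$, laminarity, adjacency of non-minimal copies, forced increasing order within gaps) are the same facts underlying the paper's two lemmas, and your ``rigidity'' crux is exactly the well-definedness of the paper's inverse algorithm, so the approach is essentially identical.
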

In the next section, we construct a bijection and prove Theorem \ref{thm:bijective} which will, in turn, complete the proof of Theorem \ref{thm:bij_rec}.

\section{The bijection} \label{sec: bijection}
We shall describe the bijection $\Phi$ from $\mathrm{GCP}_k(n)$ to the set $\flt(\stir_n^k)$. This will be done in the following way. If $(\rho,w)=\pi_1\vert\pi_2\vert\dots\vert\pi_r$ is a good $k$-colored partition of $[n]$ written in standard block notation, we will obtain a word $SW_i$ from each block $\pi_i$. The juxtaposition of all these words $W=SW_1SW_2\dots SW_r$ will be the image of $\rho_w$ under $\Phi$, (i.e., $\Phi(\rho_w)=W$). Of course, we will have to prove that the image is indeed a flattened $k$-Stirling permutation of order $n$ and the map is bijective.
\begin{definition} \label{def: bij}
    Let $(\rho,w)=\pi_1\vert\pi_2\vert\dots\vert\pi_r$ be a good $k$-colored partition of $[n]$ written in standard block notation. The map $\Phi: \mathrm{GCP}_k(n) \rightarrow \flt(\stir_n^k)$ is as follows.
    
    Let $\pi_i={x_1}_{w(x_1)}\dots{x_l}_{w(x_l)}$ be a block of $\rho_w$ in its standard block notation. Since $\pi_i$ is a block in the standard block notation, we have $x_1<x_2<\dots<x_l$. 

Follow the steps described below to obtain a word $SW_i$ from $\pi_i$.
\begin{enumerate}
    \item Place $k$ copies of the minimum element $x_1$ in a line with gaps between them (represented by bars) in the following manner. The gaps are to be numbered from right to left. Label the copies of $x_1$ from right to left by adding a superscript.

$$\underbrace{\vert}_{\text{gap $k$}}x_1^{(k)}\underbrace{\vert }_{\text{gap}} \dots \underbrace{\vert }_{\text{gap}} x_1^{(3)} \underbrace{\vert}_{\text{gap $2$}} x_1^{(2)}\underbrace{\vert }_{\text{gap $1$}} x_1^{(1)}$$
    \item Set $i\leftarrow 2$.
    \item Set $x\leftarrow x_i$.
    \item Since $x$ has the subscript $w(x)$ (it has color $c_{w(x)}$), place the $k$ copies of $x$ in the gap $w(x)$. Place them immediately to the left of the bar (to the right of any letters that already exist to the left of the bar), as shown below.
    $$\underbrace{\vert}_{\text{gap $k$}}x_1^{(k)}\underbrace{\vert }_{\text{gap}} \dots \underbrace{\vert}_{\text{gap}}x_1^{(w(x)+1)} \underbrace{\dots x^k\vert}_{\text{gap $w(x)$}}  x_1^{(w(x))} \underbrace{\vert}_{\text{gap}} \dots \underbrace{\vert }_{\text{gap}} x_1^{(2)}\underbrace{\vert }_{\text{gap $1$}} x_1^{(1)}$$ 
    \item Set $i\leftarrow i+1$. If $i\le l$, return to Step 3. Else, proceed to next step.

    \item The word created, after placing all the letters $x_2,\dots, x_l$ and removing the bars, is our required word $SW_i$.
\end{enumerate}

Define $\Phi((\rho,w)):=SW_1SW_2\dots SW_r$ to be the juxtaposition of the words $SW_i$ obtained from the blocks $\pi_i$ for $1\le i\le r$.
\end{definition}

We give an example to illustrate.
\begin{example}
    We can see that $(\rho,w)=1_12_34_26_3\vert3_1\vert5_1$ is a good $4$-colored partition of $[6]$ written in standard block notation. The blocks of $\rho$ are $B_1=\lbrace 1,2,4,6 \rbrace$, $B_2=\{ 3 \}$ and $B_3=\{ 5\}$ and the associated word $w$ is $131213$.
    To form $SW_1$, we will place the four copies of $``1"$ in the line with gaps as mentioned.
    $$\underbrace{\vert}_{4} 1 \underbrace{\vert}_{3} 1 \underbrace{\vert}_{2} 1 \underbrace{\vert}_{1} 1$$
    Now, we will place the four copies of $``2"$ before the bar $3$.
    $$\underbrace{\vert}_{4} 1\underbrace{2222 \vert}_{3} 1 \underbrace{\vert}_{2} 1 \underbrace{\vert}_{1} 1$$
    Now, we place the four copies of $``4"$ before the bar $2$.
    $$\underbrace{\vert}_{4} 1 \underbrace{2222\vert}_{3} 1 \underbrace{4444\vert}_{2} 1 \underbrace{\vert}_{1} 1$$
    Finally, we place the four copies of $``6"$ before the bar $3$.
    $$\underbrace{\vert}_{4} 1 \underbrace{22226666\vert}_{3} 1 \underbrace{4444\vert}_{2} 1 \underbrace{\vert}_{1} 1$$
    We will remove the bars and end up with the word $SW_1=1222266661444411$
    Similarly, $SW_2$ will be $3333$ and $SW_3$ will be $5555$.
    Therefore, the flattened $4$- Stirling permutation corresponding to $1_12_34_26_3\vert3_1\vert5_1$ is $122226666144441133335555$.
\end{example}
We prove that the algorithm described above produces a flattened $k$-Stirling permutation of order $n$ from a good $k$- colored partition of $[n]$.
\begin{lemma}
    If $\rho_w\in \mathrm{GCP}_k(n)$ and $\Phi$ is the map described above, then $\Phi(\rho_w)\in \flt(\stir_n^k)$.
\end{lemma}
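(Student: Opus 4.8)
The plan is to verify the three defining properties of a flattened $k$-Stirling permutation in turn: that $\Phi(\rho_w)$ is a permutation of the multiset $\{1^k,2^k,\dots,n^k\}$, that it satisfies the $k$-Stirling nesting condition, and that the leading terms of its increasing runs are weakly increasing. The first is immediate, since each $x\in[n]$ lies in exactly one block $\pi_i$ and contributes exactly $k$ copies to $SW_i$ (whether as the minimum of the block or as an element inserted into a gap), so every value occurs exactly $k$ times. Because the blocks are juxtaposed and each value occurs in only one block, all $k$ copies of a fixed value $v$ lie inside a single word $SW_i$; hence the $k$-Stirling condition may be checked one block at a time.

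For the $k$-Stirling property I would first record the shape of $SW_i$: writing $m_i=\min B_i$, the word $SW_i$ has the form $[g_k]\,m_i\,[g_{k-1}]\,m_i\cdots[g_1]\,m_i$, where $[g_j]$ denotes the content of gap $j$. Each non-minimum element $x$ is inserted as $k$ consecutive copies immediately to the left of its bar, and any later element sharing its color is appended to its right; consequently the $k$ copies of $x$ remain adjacent, so nothing lies strictly between consecutive copies of $x$ and the condition holds vacuously for $x$. For the minimum $m_i$, the entries strictly between consecutive copies are precisely the gap contents $[g_j]$, all of whose letters exceed $m_i$, so the condition holds for $m_i$ as well.

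The flattened condition is the crux. I would argue that every descent of $W=SW_1\cdots SW_r$ is a drop from a letter exceeding a block minimum down to that block minimum. Inside a fixed $SW_i$, each $[g_j]$ is weakly increasing (its letters are the $k$-fold blocks of the color-$j$ elements, listed in increasing order), so the only drops occur where a nonempty $[g_j]$ is followed by a copy of $m_i$; the transitions $m_i\to[g_{j-1}]$ are ascents since gap letters exceed $m_i$. At a block junction the last letter of $SW_{i-1}$ is the rightmost copy of $m_{i-1}$, while the first letter of $SW_i$ is at least $m_i>m_{i-1}$, so junctions carry no descent. Hence every run of $W$ other than the first begins with a block minimum $m_i$, and the first run begins with $m_1=1$ because Rule 2 forces gap $k$ of the first block to be empty. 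Reading $W$ from left to right, the block index of the position starting each successive run is non-decreasing, so the leading terms form a non-decreasing sequence among $m_1<m_2<\cdots<m_r$, which is exactly the flattened condition.

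The main obstacle is this last step, and in particular the treatment of runs that straddle a block junction. When gap $k$ of a later block is nonempty, its leading letters exceed $m_i$, and one must check that the run passing through them took its leading term from an earlier block (so that a gap-$k$ letter never serves as a run leader), together with the fact that all run leaders within a single block equal $m_i$; once both are established, the monotonicity of the leaders follows from $m_1<m_2<\cdots<m_r$.
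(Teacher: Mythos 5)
Your proof is correct and follows essentially the same route as the paper's: you verify the $k$-Stirling property blockwise (the $k$ copies of each non-minimum letter stay adjacent, and only letters exceeding $m_i$ lie between copies of $m_i$), and you derive flattenedness from the facts that each $SW_i$ ends in $m_i$, that block junctions carry no descent, and that every run leader is therefore a block minimum, these minima being increasing in standard block notation. If anything, you are more careful than the paper, which leaves implicit both the gap-shape analysis showing run leaders inside a block are copies of $m_i$ and the role of Rule 2 in forcing gap $k$ of the first block to be empty so that the first run begins with $m_1=1$.
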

\begin{proof}
    First and foremost, we have to show that $\Phi(\rho_w)\in \stir^k_n$. By construction, $\Phi(\rho_w)=SW_1SW_2\dots SW_r$. In each subword $SW_i$, all $k$ copies of every letter other than the minimum letter in that subword appears together. For the minimum letter, since it is minimum in that subword, every letter between two occurrences of it will be larger than it. Therefore, $\Phi(\rho_w)\in \stir_n^k$.
    
    Next, we show that the image is a flattened $k$-Stirling permutation. It is clear that in a subword $SW_i$, the last letter of the subword is the minimum letter of the subword. Therefore, every run in the subword $SW_{i+1}$ either is continuation of a run starting with the minimum letter of $SW_i$ or is a new run started with the minimum letter of $SW_{i+1}$. Since $\min(B_i)<\min(B_j)$ for $i<j$, in standard block notation, the leading terms of the runs are weakly increasing. This completes the proof.
\end{proof}

Now, we prove that the map is indeed a bijection on the two sets $\mathrm{GCP}_k(n)$ and $\flt(\stir_n^k)$.
\begin{lemma}
    The map $\Phi$, described above, is a bijection.
\end{lemma}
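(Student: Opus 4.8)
The plan is to construct an explicit inverse $\Psi:\flt(\stir_n^k)\to\mathrm{GCP}_k(n)$ and check that $\Phi\circ\Psi=\id$ and $\Psi\circ\Phi=\id$; since $\Phi$ is already known to land in $\flt(\stir_n^k)$, this gives the bijection. The map $\Psi$ reverses Definition \ref{def: bij} in two stages. First I would \emph{recover the blocks} by successive minima: set $c_1=\min(W)$, let $SW_1$ be the prefix of $W$ ending at the last occurrence of $c_1$, delete it, and repeat, producing $W=SW_1SW_2\cdots SW_r$ with $c_i=\min(SW_iSW_{i+1}\cdots SW_r)$ equal to the last letter of $SW_i$. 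Second, within each $SW_i$ I would \emph{read off the colors}: the $k$ copies of $c_i$ cut $SW_i$ into segments $G_k^{(i)},G_{k-1}^{(i)},\dots,G_1^{(i)}$ (the content before the first copy, then between consecutive copies), and each non-minimal value $x$ occurring in $G_j^{(i)}$ is declared to lie in block $B_i$ with color $c_j$, while $c_i$ itself receives color $c_1$. This is precisely the inverse of the gap-filling step, so $\Psi\circ\Phi=\id$ is immediate from the way $\Phi$ deposits the $k$ copies of a color-$j$ letter into gap $j$.

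The real content of the lemma is that $\Psi$ is \emph{well defined}: that an arbitrary flattened $k$-Stirling permutation genuinely decomposes into blocks of the rigid shape $SW_i=G_k^{(i)}c_iG_{k-1}^{(i)}c_i\cdots G_1^{(i)}c_i$, where each $G_j^{(i)}$ is weakly increasing and consists of distinct values each repeated exactly $k$ times, and where $G_k^{(1)}=\varnothing$. Granting this \emph{structural rigidity}, the coloring produced by $\Psi$ lies in $\mathrm{GCP}_k(n)$ (Rule 1 holds because the segment-separators are exactly the block minima, and Rule 2 holds because $G_k^{(1)}=\varnothing$), and $\Phi\circ\Psi=\id$ follows because rebuilding $SW_i$ from its gap contents via Definition \ref{def: bij} returns the original word.

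I expect the structural rigidity to be the main obstacle. I would prove it by induction on the block index, invoking the global weak monotonicity of the run-leaders (the leading letters of the increasing runs) for $W$ as a whole rather than recursing on the suffix $SW_iSW_{i+1}\cdots SW_r$ — note that this suffix need \emph{not} be flattened by itself, for instance the suffix $3322$ of the flattened word $113322$, so the flattened hypothesis cannot be applied to it directly. The key step is that a descent \emph{inside} some gap $G_j^{(i)}$ would create a run-leader strictly larger than $c_i$, whereas the copy of $c_i$ immediately following that (necessarily nonempty) gap is itself a run-leader, being reached by a descent; since the former precedes the latter, this would violate the weak monotonicity of the leaders of $W$. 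Hence every gap is weakly increasing, and then the $k$-Stirling condition forces all $k$ copies of each value in a gap to be consecutive and confined to that gap, since a second copy separated from the first by a copy of $c_i<x$, or lying in a later block, would place a letter smaller than $x$ strictly between two occurrences of $x$. This simultaneously shows that each $SW_i$ is a complete block whose values do not recur later, which is exactly what keeps the successive-minima decomposition well defined at the next step. Finally, a short separate argument shows that a flattened $k$-Stirling permutation must begin with its minimum letter $1$ — otherwise the first occurrence of $1$ would be a leader equal to $1$ sitting to the right of the larger leader $W_1$ — which forces $G_k^{(1)}=\varnothing$ and delivers Rule 2.
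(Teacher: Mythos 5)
Your proof is correct, and its skeleton is the same as the paper's: both establish bijectivity by constructing an explicit inverse that peels $W$ into the subwords $SW_i$ and recovers each letter's color from the gap (relative to the copies of the block minimum) in which it sits. Two differences are worth recording. The peeling direction is mirrored --- the paper cuts from the right (take the last letter $\alpha$ of the current word and cut after the rightmost letter strictly smaller than $\alpha$), while you cut from the left by successive minima --- but the two rules produce the identical decomposition. More substantively, the paper's proof is only a sketch: it describes the algorithm and asserts that each peeled factor ``came from a block $B_j$,'' without ever verifying that an arbitrary $W\in\flt(\stir_n^k)$ has the rigid form $SW_i=G_k^{(i)}c_iG_{k-1}^{(i)}c_i\cdots G_1^{(i)}c_i$ with weakly increasing gaps, the $k$ copies of each value consecutive inside a single gap, and $G_k^{(1)}=\varnothing$; this rigidity is exactly what makes the inverse well defined, forces its output to satisfy Rules 1 and 2 of $\mathrm{GCP}_k(n)$, and yields $\Phi\circ\Psi=\id$. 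You isolate this as the main content and supply the missing argument: the run-leader contradiction for a descent inside a gap, the $k$-Stirling condition to confine and glue the copies of each value, and the separate argument that $W$ must begin with $1$ (which delivers Rule 2). Your warning that the suffix $SW_iSW_{i+1}\cdots SW_r$ need not itself be flattened (e.g.\ $3322$ inside $113322$), so one must invoke the global monotonicity of run leaders rather than recurse on suffixes, is a genuine subtlety that the paper's terse proof never confronts. In short, the paper's version buys brevity; yours buys an actual verification of well-definedness and of both composition identities.
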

\begin{proof}
\textit
    We shall show that an inverse exists under this map and that it is unique. Let $W \in \flt(\stir_n^k)$. 
    \begin{enumerate}
        \item[Step 1:] Set $W'\leftarrow W$ and $j\leftarrow 1$. 
        \item[Step 2:]  Look at the last letter $\alpha$ of $W'$. Let $i$ be the position of the rightmost occurrence of a letter strictly smaller than $\alpha$. Define $SW_j:=W'_{i+1}W'_{i+2}\dots W'_{l(W')}$ (where $l(W')$ is the length of $W'$). By our construction of the bijection, this subword came from a block $B_j$ whose minimum element is $\alpha$ and $B_j$ contains the letters that appear in $SW_j$.  If we number the copies of $\alpha$ in $SW_j$ from right to left, it is clear how to color the elements of $B_j$. The minimum element $\alpha$ will have subscript $1$ and if the copies of the letter $x$ lie between the $i^{th}$ and $(i+1)^{th}$ copy of $\alpha$ (resp. after the $k^{th}$ copy of $\alpha$), then it will have subscript $w(x)=i$ (resp. $w(x)=k$).
       \item[Step 3:] Delete the subword $SW_j$ from $W'$ and set $W'\leftarrow W'\setminus SW_j$. If $l(W')>0$, set $j\leftarrow j+1$ and go back to Step 2. Proceed if $l(W')=0$.
       \item[Step 4:] We have $W=SW_j\dots SW_1$ and by Step 2, that it is the image of the unique good $k$-colored partition $\rho_w$ where $\rho=B_j\vert \dots \vert B_1$ and $w$ is the word formed by the subscripts of the corresponding elements. Furthermore, the sets $B_i$ satisfy $\min(B_i)<\min(B_j)$ for $j<i$.
    \end{enumerate}
\end{proof}
Therefore, we have completed the proof of Theorem $\eqref{thm:bijective}$ and thereby, of Theorem $\eqref{thm:bij_rec}$.

\section{The proof of Theorem \eqref{thm:egf} and some corollaries} \label{section: proof-corollary}
\begin{proof}[Proof of Theorem \ref{thm:egf}]
    We define a labelled structure $\mathrm{COLOR}_r(L)$ that assigns one of $r$ colors to the elements of the label set $L$. The egf of the structure is $\mathcal{F}_{\mathrm{COLOR}_r}(z):=\sum_{n\ge 0} c_n\frac{z^n}{n!}=\sum_{n\ge 0} r^n\frac{z^n}{n!}=e^{rz}$.

We define a labelled structure $\mathrm{RCOLOR}_r(L)$ that assigns one of the $r$ colors to the every element of $L$ other than the element with the smallest label in $L$. The egf of the structure is $$\mathcal{F}_{\mathrm{RCOLOR}_r}(z):=\sum_{n\ge 0} c_n\frac{z^n}{n!}=\sum_{n\ge 0} r^{n-1}\frac{z^n}{n!}=1+\frac{e^{rz}-1}{r}.$$

Let $\mathrm{G}_k$ be the labelled structure that is obtained by doing the following.
Given the label set $L=\{ l_1,\dots, l_n\}$, then make the weak composition $(L_1,L_2)$ where $L_1$ corresponds to the elements in the first block of a good $k$-colored partition and $L_2=L\backslash L_1$, its complement. On $L_1$, we want to color the elements using $k-1$ colors. On $L_2$, we want to further partition $L_2$ into blocks and in each block, we want to color all but the non-minimum element in one of $k$ colors. 

In the language of labelled structures, the structure $$\mathrm{G}_k=\mathrm{COLOR}_{k-1}\times \mathrm{SET}(\mathrm{\overline{{RCOLOR}_{k}}}).$$
Therefore, the egf $$\mathcal{F}_{\mathrm{G}_k}(z)=\exp{\bigg((k-1)z + \frac{e^{kz}-1}{k}\bigg)}$$ follows from the product and composition rule.

If we notice that $\# \mathrm{G}_k(\{2,\dots,n+1\})=\vert \mathrm{GCP}_{k}(n+1) \vert =\vert \flt(\stir^k_{n+1}) \vert$, then we have proved \ref{thm:egf}. 
\end{proof}

Using this, we are in a position to prove the following corollaries.

\begin{proof}[Proof of Corollary \ref{thm: series-flat}]
We can write $e^{\frac{e^{kz}}{k}}$ as the series $$\sum_{i\ge 0} \frac{e^{kzi}}{k^ii!}=\sum_{i\ge 0 }\sum_{j\ge 0} \frac{(kiz)^j}{k^ii!j!}.$$
The coefficient of $\frac{z^i}{i!}$ in the series is $$\sum_{r\ge 0} \frac{(kr)^{i}}{k^rr!}.$$
\begin{eqnarray*}
    \vert \flt(\stir^k_{n+1}) \vert &=&\bigg[\frac{z^{n}}{n!}\bigg]\exp{\bigg((k-1)z + \frac{e^{kz}-1}{k}\bigg)}\\
    &=&\bigg[\frac{z^{n}}{n!}\bigg]e^{-1/k}\exp\Bigg((k-1)z+\frac{e^{kz}}{k} \Bigg)\\
  &=&e^{-1/k}\sum_{i=0}^{n} \binom{n}{i} \sum_{r\ge 0} \frac{(kr)^i}{k^rr!}(k-1)^{n-i}\\
  &=&e^{-1/k} \sum_{r \ge 0} \frac{(kr+k-1)^{n}}{k^rr!}.
\end{eqnarray*}
\end{proof}

The bijection allows us to prove the recurrence in Theorem \ref{thm: recurrence}.

\begin{proof}[Proof of Theorem \ref{thm: recurrence}]
 Suppose the block that contains the element $``n+1"$ does not contain $``1"$ and has $r$ elements, it can be chosen and colored in $\binom{n-1}{r-1}k^{r-1}$ ways. The other blocks can be standardised (replacing the smallest number by $1$, the second smallest by $2$ and so on), while retaining the colors, to a give a good $k$-colored partition of $[n-r+1]$. This process is invertible. Once the block $B$ containing $``n+1"$ is known, we can replace each letter in $[n+1-|B|]$ with letters of $[n+1]-B$  while retaining their colors to get back the original good $k$-colored partition of $[n+1]$.
 
 Suppose it is in the block with $``1"$, then it could have any of the $k-1$ colors. If $``n+1"$ were removed, the remaining elements would form a good $k$-colored partition of $[n]$. Since the number of good $k$-colored partitions of $[n]$ is the number of flattened $k$-Stirling permutations of order $n$ and this process is invertible, we are done.
 
\end{proof}
\section{A refinement of the set of flattened $k$-Stirling permutations}\label{sec: run-refinement}
Recall that runs of a flattened $k$-Stirling permutation $W$ are maximal contiguous weakly increasing subwords of $W$. If we define $\run(W)$ to be the number of runs in $W$, then we can refine the set $\flt(\stir_n^k)$ based on the number of runs. For integers $n\ge 1$ and $k\ge 1$, we will denote ,by $\flt_s(\stir_n^k)$, the set of flattened $k$-Stirling permutations of order $n$ with $s$ runs.
If we define $\des(W)$ to be number of indices $i$ with $1\le i<nk$ where $W(i)>W(i+1)$, then one can see that $\run(W)=\des(W)+1$. Clearly, $1^k2^k\dots n^k$ is the only flattened $k$-Stirling permutation that has $1$ run.
In the following proposition, we determine the maximum number of runs that a flattened $k$-Stirling permutation has.
\begin{proposition}
    If $W$ is a flattened $k$-Stirling permutation, then  $1 \le \run(W)\le \lceil \frac{kn}{k+1} \rceil$.
\end{proposition}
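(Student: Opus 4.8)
The lower bound $\run(W)\ge 1$ is immediate since $W$ is a nonempty word. For the upper bound the plan is to pull $W$ back through the bijection $\Phi$ of Section~\ref{sec: bijection} and count descents block by block. Writing $\Phi^{-1}(W)=\pi_1\vert\cdots\vert\pi_r$ so that $W=SW_1SW_2\cdots SW_r$, I would first record a structural fact read off directly from Definition~\ref{def: bij}: inside $SW_i$ the $k$ copies of $a_i:=\min(B_i)$ act as separators, the non-minimal letters occupy the $k$ gaps between them in weakly increasing runs, and $SW_i$ ends in $a_i$. Consequently the descents internal to $SW_i$ are exactly the \emph{nonempty} gaps (each nonempty gap ends in a letter exceeding $a_i$, immediately followed by a copy of $a_i$), while the junction $SW_i\to SW_{i+1}$ is never a descent because $\min(B_i)<\min(B_{i+1})$. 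Hence, if $p_i$ denotes the number of nonempty gaps of block $i$, then $\des(W)=\sum_{i=1}^r p_i$.

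The second step bounds each $p_i$. A nonempty gap contains at least one non-minimal letter, so $p_i$ is at most $d_i:=|B_i|-1$, and it is also at most the number of available gaps, which is $k$ for $i\ge 2$ but only $k-1$ for the first block: by Rule~2 of a good $k$-colored partition the elements of $B_1$ use only the colors $c_1,\dots,c_{k-1}$, so the top gap of $B_1$ must stay empty. Since $\sum_i d_i=n-r$ and the total number of gaps is $(k-1)+(r-1)k=rk-1$, summing gives
\begin{equation*}
\des(W)\le \min\Big(\sum_i d_i,\ rk-1\Big)=\min(n-r,\ rk-1).
\end{equation*}
I expect the delicate point to be precisely this first-block correction: without the saving of one gap in $B_1$ the estimate weakens to $\min(n-r,rk)$, which is too loose to survive the final rounding, so Rule~2 must be used in an essential way.

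Finally I would convert this into the stated ceiling. Using $\run(W)=\des(W)+1\le\min(rk,\ n-r+1)$ together with the elementary inequality $\min(x,y)\le\tfrac{1}{k+1}x+\tfrac{k}{k+1}y$ applied to $x=rk$, $y=n-r+1$,
\begin{equation*}
\run(W)\le \frac{rk+k(n-r+1)}{k+1}=\frac{k(n+1)}{k+1}=\frac{kn}{k+1}+\frac{k}{k+1}<\frac{kn}{k+1}+1.
\end{equation*}
Because $\run(W)$ is an integer lying strictly below $\frac{kn}{k+1}+1$, it is at most $\big\lceil\frac{kn}{k+1}\big\rceil$, the desired bound; note this holds for every admissible block count $r$, so no optimization over $r$ is required. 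The same estimate is sharp: taking $r$ near $\frac{n+1}{k+1}$ and spreading the non-minimal letters one per gap attains equality, which is the value underlying Proposition~\ref{max_runs}.
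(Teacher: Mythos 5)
Your proof is correct, and it takes a genuinely different route from the paper's. The paper argues directly on the word $W$, with no reference to the bijection: every run except possibly the last must contain all $k$ copies of at least one letter and hence has length at least $k+1$, so $kn\ge (k+1)(\run(W)-1)+1$, and dividing by $k+1$ (splitting into the cases $k+1\mid n$ and $k+1\nmid n$) yields the ceiling; the paper then sketches a construction attaining the bound. You instead pull $W$ back through $\Phi$ and identify $\des(W)$ with the total number of nonempty gaps over all blocks --- in effect re-deriving Lemma~\ref{lemma: descent-color}, which the paper states and proves only later, in Section~\ref{section: descent-egf} --- then bound that count in two ways (by the number of non-minimal elements, giving $n-r$, and by the gap supply, giving $rk-1$, where Rule~2 kills the top gap of the first block), and finally eliminate the block count $r$ via $\min(x,y)\le\frac{1}{k+1}x+\frac{k}{k+1}y$ followed by integer rounding. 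All of your steps check out: the junctions $SW_i\to SW_{i+1}$ are indeed ascents, each nonempty gap contributes exactly one descent, and your observation that the Rule~2 saving of one gap is what makes the final inequality strict (and hence survives rounding precisely when $k+1\mid n$) is accurate; in the paper's argument the analogous slack comes from allowing only the last run to be short. As for what each approach buys: the paper's proof is shorter and self-contained (it is valid independently of $\Phi$, and its sharpness construction is what the later treatment of $\flt_{\max}$ implicitly rests on), while yours reuses the bijection machinery, makes the source of the rounding subtlety completely transparent, and the descent--gap identity you establish along the way is exactly the lemma the paper needs afterwards for the EGF of the $\flt$-Eulerian polynomials. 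Your closing sharpness claim is only sketched, but the proposition asserts only the inequality, so nothing essential is missing.
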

\begin{proof}
    Every run in a flattened $k$-Stirling permutation contains $k$ copies of at least one element (except possibly for the last run). Therefore, the length of each run is at least $k+1$ (except possibly for the last run). This gives us the bound $\run(W)\le \lfloor \frac{kn}{k+1} \rfloor +1$ when $k+1$ does not divide $n$ and $\run(W)\le \frac{kn}{k+1}$ when $k+1$ divides $n$.
    It is easy to construct a flattened $k$-Stirling permutation whose runs are all of length $k+1$ (except possibly the last run), completing our proof.
\end{proof}

We move on to the proof of Proposition \ref{prop: 2_runs}.

\begin{proof}[Proof of Proposition \ref{prop: 2_runs}]
     For a good $k$-colored partition $\rho_w=(\rho,w)$, singleton blocks do not create descents under the action of $\Phi$. Thus, there is only one non-singleton block in the standard block notation of $\rho_w$. Furthermore, all the non-minimum elements in the non-singleton block must have the same subscript. Otherwise, the subword formed by that block will have a descent. We split the counting into two cases.
\begin{enumerate}
    \item[Case 1:]{(The non-singleton contains $``1"$)}\\
    For $1\le i\le n-1$, we can pick the other $i$ elements in the block in $\binom{n-1}{i}$ ways and their subscripts in $k-1$ ways. Summing over $i$, this produces $(k-1)(2^{n-1}-1)$ flattened $k$-Stirling permutations.
    \item[Case 2:]{(The non-singleton does not contain $``1"$)}\\
    For $2\le i \le n-1$, we can pick the non-singleton block in $\binom{n-1}{i}$ ways and their subscripts in $k$ ways. Summing over $i$, this produces $k(2^{n-1}-1-(n-1))$ flattened $k$-Stirling permutations.
\end{enumerate}
     Summing these two numbers finishes the proof.
\end{proof}

We now prove \ref{prop:3_runs}. The argument is similar to the one in Proposition \ref{prop: 2_runs} but is slightly more involved.

\begin{proof}[Proof of {Proposition \ref{prop:3_runs}}]
    In light of the formula $\run(W)=\des(W)+1$, we need to have $2$ descents in our flattened $k$-Stirling permutation. Therefore, there are at most $2$ non-singleton blocks in the standard block notation of its associated good $k$-colored notation.
    Here, we split into following cases.
    \begin{enumerate}
        \item[Case 1:] (There is one non-singleton block and it contains $``1"$)\\
        If there is only one non-singleton block, then there are $2$ different subscripts among the non-minimum elements. Choose the elements in the block in $\binom{n-1}{i}$ ways and then, from the chosen ones, pick out $j$ elements to have one of the subscripts in $\binom{i}{j}$ ways. This forces the other elements to have the other subscript. The subscript can be assigned in $\frac{(k-1)(k-2)}{2}$. Summing over $i,j$, we get $\frac{(k-1)(k-2)}{2}\sum_{i=2}^{n-1}\binom{n-1}{i}\sum_{j=1}^{i-1}\binom{i}{j}$.
        \item[Case 2:] (There is one non-singleton block and it doesn't contain $``1"$)\\
        Choose the elements in the block in $\binom{n-1}{i}$ ways and then, from the chosen ones, pick out $j$ elements to have one of the subscripts in $\binom{i-1}{j}$ ways from the elements that are not minimum. We have to pick at least $1$ element and at most $i-2$, as all of them cannot have the same subscript. This forces the unpicked elements to have the other subscript. The subscript can be assigned in $\frac{k(k-1)}{2}$. Summing over $i,j$ gives $\frac{k(k-1)}{2}\sum_{i=3}^{n-1}\binom{n-1}{i}\sum_{j=1}^{i-2}\binom{i-1}{j}$.
        \item[Case 3:] (There are two non-singleton blocks and one of them contains $``1"$)\\
        Choose the elements in the block containing $``1"$ in $\binom{n-1}{i}$ ways and then, assign subscripts in $k-1$ ways. Choose the $j$ elements of the second block from the remaining elements in $\binom{n-1-i}{j}$ ways and assign subscripts in $k$ ways. Summing over $i,j$ gives $k(k-1)\sum_{i=1}^{n-3}\binom{n-1}{i}\sum_{j=2}^{n-1-i}\binom{n-1-i}{j}$ (In the first summation, $i$ goes only till $n-3$ because there needs to be a non-singleton block of size $2$).
        \item[Case 4:] (There are two non-singleton blocks and neither of them contain $``1"$)\\
        Choose the elements in one block in $\binom{n-1}{i}$ ways and then, assign subscripts in $k$ ways. Choose the $j$ elements of the second block from the remaining elements in $\binom{n-1-i}{j}$ ways and assign subscripts in $k$ ways. Since the choice is unordered, we have to divide by $2$. Summing over $i,j$, we get $\frac{k^2}{2}\sum_{i=2}^{n-3}\binom{n-1}{i}\sum_{j=2}^{n-1-i}\binom{n-1-i}{j}$.
    \end{enumerate}
Summing these gives
\begin{eqnarray*}
    \vert \flt_3(\stir_n^k) \vert&=&  \frac{(k-1)(k-2)}{2}\sum_{i=2}^{n-1}\binom{n-1}{i}\sum_{j=1}^{i-1}\binom{i}{j}  +  \frac{k(k-1)}{2}\sum_{i=3}^{n-1}\binom{n-1}{i}\sum_{j=1}^{i-2}\binom{i-1}{j}\\  &+&  k(k-1)\sum_{i=1}^{n-3}\binom{n-1}{i}\sum_{j=2}^{n-1-i}\binom{n-1-i}{j}  +  \frac{k^2}{2}\sum_{i=2}^{n-3}\binom{n-1}{i}\sum_{j=2}^{n-1-i}\binom{n-1-i}{j}\\
    &=& \frac{(k-1)(k-2)}{2}\sum_{i=2}^{n-1}\binom{n-1}{i}(2^i-2)  +  \frac{k(k-1)}{2}\sum_{i=3}^{n-1}\binom{n-1}{i}(2^{i-1}-2)\\  &+&  k(k-1)\sum_{i=1}^{n-3}\binom{n-1}{i}\sum_{j=2}^{n-1-i}\binom{n-1-i}{j}  +  \frac{k^2}{2}\sum_{i=2}^{n-3}\binom{n-1}{i}\sum_{j=2}^{n-1-i}\binom{n-1-i}{j}. \\
    &=& \frac{(k-1)(k-2)}{2}(3^{n-1}-2^n+1)  +  \frac{k(k-1)}{12}(3^n-6\times 2^{n}+6n+3)\\  &+&  k(k-1)\sum_{i=1}^{n-3}\binom{n-1}{i}\sum_{j=2}^{n-1-i}\binom{n-1-i}{j}  +  \frac{k^2}{2}\sum_{i=2}^{n-3}\binom{n-1}{i}\sum_{j=2}^{n-1-i}\binom{n-1-i}{j}
\end{eqnarray*}
\end{proof}

We prove the formula for the number of flattened $2$-Stirling permutations with maximum number of runs.

\begin{proof}[Proof of Proposition \ref{max_runs}]
     Let $W$ be a flattened $2$-Stirling permutation of order $n$ and $\Phi(W)=\rho_w$ be its associated type $B$ set partition. Suppose there are $r$ blocks in $\rho_w$, say $B_1,B_2,\dots, B_r$, then $\vert B_1\vert+\dots+\vert B_r\vert=n$. The number of descents of $W$ is $\run(W)-1$. Each block contributes only $0,1,2$ descents in the subword it creates. The maximum descents a block can create is $2$ and that happens only if the block contains a saturated $3$-set. The first block can make at most $1$ descent. Every other block can produce at most $2$. 
     
     In a good $k$-partition that maps to flattened $k$-Stirling permutation with maximum descents, there can be at most one block $B_i$ such that $3\le |B_i|<5$. If not, we can remove two elements from this block and color the remainder to make it saturated and put the removed elements in a separate block and have more descents in the image, contradicting maximality of descents. Therefore, most of the elements have to be partitioned into saturated $3$-sets.

     Similarly, outside of the first block, there can be at most $2$ doubleton blocks as $3$ blocks can be split into two saturated $3$-blocks and contribute more descents, contradicting maximality. Further, all blocks of size greater than $3$ contain a saturated $3$-subset as otherwise, we can adjust colors to get more descents. We next split the proof based on the value of $n \! \!\pmod 3$.

     \begin{enumerate}
         \item If $n=3k+1$, then the number of descents must be $2k$. 
         Let $r_3$ be the number of blocks that contribute $2$ descents, $r_2$ the number of blocks that contribute $1$ and $r_1$ that contribute no descents, then the following hold: 
         $$2r_3+r_2=2k$$
         $$3r_3+2r_2+r_1\le 3k+1$$
         We have $$2r_1+r_2 \le 2.$$
         If there is only one doubleton, then we would be left with $3k-1$ elements to get $2k-1$ descents. If we put $3k-3$ in saturated $3$-sets, we would get $2k-2$ descents. This forces another doubleton to appear. Therefore, there are either $2$ doubletons or $1$ singleton. The rest are all saturated $3$-subsets because either $r_2=2$ and we have to produce $2k-2$ descents with $3k-3$ elements or $r_1=1$ and we have to produce $2k$ descents with $3k$ elements.         
         
     When there is one singleton, all the other elements form saturated $3$-subsets. This can be done in $\frac{2^k}{k!}\binom{3k}{\underbrace{3,3,\dots,3,3}_{k \text{ times}}}$ ways.
     When there are two doubletons, the first block must be a doubleton. This can be chosen in $n-1$ ways. The next doubleton block can be chosen and colored in $2\times \binom{n-2}{2}$ ways. The other size $3$ blocks can be formed in $\frac{2^{k-1}}{(k-1)!}\binom{3(k-1)}{\underbrace{3,3,\dots,3,3}_{k-1 \text{ times}}}=\frac{3\times2^{k}}{(k-1)!}\binom{3k}{\underbrace{3,3,\dots,3,3}_{k \text{ times}}}$ ways.     
     This gives a total of $$\frac{2^k}{k!}\binom{3k}{\underbrace{3,3,\dots,3,3}_{k \text{ times}}}+\frac{3\times2^{k}}{(k-1)!}\binom{3k}{\underbrace{3,3,\dots,3,3}_{k \text{ times}}}=\frac{2^k}{k!}(3k+1)\binom{3k}{\underbrace{3,3,\dots,3,3}_{k \text{ times}}}. $$
     \item If $n=3k+2$, then $2r_1+r_2 \le 1$. This implies there is only $1$ doubleton. The doubleton block has to be the first block and can be chosen in $n-1$ ways. The rest are saturated $3$-subsets (by the argument in the first case) which can be chosen in $\frac{2^k}{k!}(3k+1)\binom{3k}{\underbrace{3,3,\dots,3,3}_{k \text{ times}}}$ ways.
     \item If $n=3k+3$, then $2r_1+r_2 \le 3$. We need to get $2k+1$ descents. When $2r_1+r_2=0$, this means the first block $B_1$ has $|B_1|\ge 3$. This forces all other blocks to be saturated $3$-sets which will give at most $2k$ descents, which leaves the first block having size $3$.
     
     When $2r_1+r_2=1$, there is only one doubleton. If there is no singleton and the doubleton is the first block, then there is a block of size $4$ that contains a saturated $3$-subset and the others are saturated $3$-sets. Suppose the doubleton is not the first block, the first block has at least $3$ elements. It can contribute at most $1$ descent, the doubleton contributes at most $1$ descent and the remaining $2k-1$ descents has to come from at most $3k-2$ elements, which is not possible.
     
     When $2r_1+r_2=2$, there is either one singleton or two doubletons. If there is one singleton and no doubletons, we would have $3k+2$ elements to produce $2k+1$ descents and this is not possible without a doubleton. If there are two doubletons, then this would account for $2$ descents. The other $2k-1$ descents should come from $3k-1$ elements. This implies the presence of another doubleton. Therefore, there is a singleton and a doubleton or three doubletons.

     When $2r_1+r_2=3$, there are no new cases.

      Now, if the first block has $3$ elements, then we can make that choice in $\binom{n-1}{2}$ ways. The rest can be made into saturated $3$-sets in $\frac{2^{k}}{k!}\binom{3k}{\underbrace{3,3,\dots,3,3}_{k \text{ times}}}$ ways. 
     $$I_1=\binom{n-1}{2}\frac{2^{k}}{k!}\binom{3k}{\underbrace{3,3,\dots,3,3}_{k \text{ times}}}=\frac{2^{k-1}}{k!}(3k+2)(3k+1)\binom{3k}{\underbrace{3,3,\dots,3,3}_{k \text{ times}}}.$$

      For the case when one of the blocks has size $4$, first, we pick out the element in the first block in $n-1$ ways. We, then, pick the $4$-set and index it in $6\binom{n-2}{4}$ ways. Finally, we make saturated $3$-sets from the remaining elements in $\frac{2^{k-1}}{k-1!}\binom{3k-3}{\underbrace{3,3,\dots,3,3}_{k-1 \text{ times}}}$ ways.
     $$I_2= 6(n-1)\binom{n-2}{4}\frac{2^{k-1}}{k-1!}\binom{3k-3}{\underbrace{3,3,\dots,3,3}_{k-1 \text{ times}}}=3(3k+2)(3k+1)\frac{2^{k-2}}{k-1!}\binom{3k}{\underbrace{3,3,\dots,3,3}_{k \text{ times}}}.$$
     
     If the singleton is the first block, then the doubleton can be chosen and colored in $2\binom{n-1}{2}$ ways. The remaining $n-3$ elements can be made into saturated $3$-sets in $\frac{2^{k}}{k!}\binom{3k}{\underbrace{3,3,\dots,3,3}_{k \text{ times}}}$ ways. Thus,
     $$I_3= 2\binom{n-1}{2} \frac{2^{k}}{k!}\binom{3k}{\underbrace{3,3,\dots,3,3}_{k \text{ times}}}= \frac{2^{k}}{k!}(3k+2)(3k+1)\binom{3k}{\underbrace{3,3,\dots,3,3}_{k \text{ times}}}.$$
     When the doubleton is the first block, this choice can be made in $n-1=3k+2$ ways. From the remaining $3k+1$ numbers, we make saturated $3$-subsets in $\frac{2^{k}}{k!}(3k+2)\binom{3k+1}{\underbrace{3,3,\dots,3,3}_{k \text{ times}}}$ ways. The remaining element after the choice will be singleton. Thus, this gives a count of
     $$I_4=\frac{2^{k}}{k!}(3k+2)(3k+1)\binom{3k}{\underbrace{3,3,\dots,3,3}_{k \text{ times}}}.$$ 
     If there are 3 doubletons in the standard block notation, then we can pick the first block in $n-1$ ways, the second doubleton in $2\binom{n-2}{2}$ ways and third in $2\binom{n-4}{2}$. The rest can be made into saturated $3$-sets in $\frac{2^{k-1}}{k-1!}\binom{3k-3}{\underbrace{3,3,\dots,3,3}_{k-1 \text{ times}}}$ ways. However, the order of picking the doubletons does not matter and we have to divide by a factor of two. Thus, this gives a count of 
     $$I_5=3(3k+2)(3k+1)\frac{2^{k-1}}{k-1!}\binom{3k}{\underbrace{3,3,\dots,3,3}_{k \text{ times}}}.$$
          The answer we require is $\sum_{i=1}^5 I_i$. Clearly, this equals the RHS given in Proposition \ref{max_runs}.
     \end{enumerate}

\end{proof}
\section{EGF for the descent statistic over flattened $k$-Stirling permutations} \label{section: descent-egf}
We need the following lemma to connect descents to the colors in the set partition before we prove Theorem \ref{thm: main-egf}.
\begin{lemma}\label{lemma: descent-color}
    Let $(\rho,w)=\pi_1 \vert \pi_2 \vert \dots \vert \pi_r$ be a good $k$-colored partition of $[n]$, written in standard block notation. If $SW_i$ created by $\pi_i={x_1}_{w(x_1)}\dots{x_l}_{w(x_l)}$, under the map $\Phi$, then $\des(SW_i)=\#\lbrace w(x_2),\dots,w(x_l) \rbrace$, i.e., the number of descents in the word $SW_i$ is the number of distinct colors that appear in the coloring of the non-minimum elements of $\pi_i$.
\end{lemma}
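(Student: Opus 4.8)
The plan is to read off the descents of $SW_i$ directly from the explicit shape of the word produced by the algorithm in Definition \ref{def: bij}, rather than reasoning about $\Phi$ abstractly. First I would describe $SW_i$ as a concatenation of ``gap blocks'' interleaved with copies of the minimum letter $x_1$. Concretely, after the bars are erased, reading $SW_i$ from left to right one passes through gap $k$, then the copy $x_1^{(k)}$, then gap $k-1$, then $x_1^{(k-1)}$, and so on down to gap $1$ and the final copy $x_1^{(1)}$. By construction, the content of gap $j$ consists precisely of the $k$-fold copies of those non-minimum letters $x_t$ with $w(x_t)=j$; since the algorithm processes $x_2,\dots,x_l$ in increasing order and always inserts to the right of the letters already present in that gap, the content of each gap is weakly increasing (it is a juxtaposition of blocks $x_t^k$ with the underlying values $x_t$ strictly increasing).

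Next I would classify every adjacent pair in $SW_i$. Within a single block $x_t^k$ the two neighbours are equal, and between two consecutive distinct letters inside one gap the left letter is strictly smaller, so no descent arises inside a gap. Every non-minimum letter is strictly greater than $x_1$, so the pair formed by $x_1^{(j)}$ and the first letter of gap $j-1$ (when that gap is non-empty) is an ascent, while if gap $j-1$ is empty the neighbours $x_1^{(j)}x_1^{(j-1)}$ are equal. The only remaining type of adjacency is the pair consisting of the last letter of the content of gap $j$ followed by $x_1^{(j)}$: here a value strictly larger than $x_1$ is immediately followed by $x_1$, which is a descent. Such a pair exists if and only if gap $j$ is non-empty.

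It then follows that $\des(SW_i)$ equals the number of non-empty gaps. Finally, gap $j$ is non-empty exactly when some non-minimum letter carries color $j$, that is, when $j\in\{w(x_2),\dots,w(x_l)\}$; hence the number of non-empty gaps is the number of distinct colors appearing among the non-minimum elements, giving $\des(SW_i)=\#\{w(x_2),\dots,w(x_l)\}$, as claimed. I expect the only delicate point to be the bookkeeping of the boundary and empty gaps: checking that the leftmost gap $k$, which has no copy of $x_1$ to its left, still contributes its single descent through the pair preceding $x_1^{(k)}$ just like the interior gaps, and that runs of consecutive copies of $x_1$ coming from several empty gaps in a row produce no spurious descents. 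Everything else is a routine case check on adjacent letters, so this is the step I would write out most carefully.
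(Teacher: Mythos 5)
Your proposal is correct and follows essentially the same route as the paper's proof: the paper likewise observes that in $SW_i$ at most one descent can occur between $x_1^{(j+1)}$ and $x_1^{(j)}$, namely just before $x_1^{(j)}$, and that it occurs exactly when some element of $\pi_i$ has color $c_j$. Your write-up is simply a fuller version of that one-sentence argument, and your careful treatment of the leftmost gap $k$ and of consecutive empty gaps fills in boundary details the paper leaves implicit.
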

\begin{proof}
    Notice that in the subword $SW_i$, between two consecutive occurrences of $x_1^{(j+1)}$ and $x_1^{(j)}$, there is at most one descent and this descent occurs if there is an element of the color $c_j$ in the block $\pi_i$ and the descent would occur just before $x_1^{(j)}$.  
\end{proof} 
We now move on to our proof of Theorem \ref{thm: main-egf}.
\begin{proof}[Proof of Theorem \ref{thm: main-egf}]
    In light of Lemma \ref{lemma: descent-color}, we want to keep track of the number of distinct colors of the non-minimum element in each block if we want to keep track of descents over flattened $k$-Stirling permutations.
  
The structure $\mathrm{WEIGHT}$ on the label set $L$ is a structure that gives the weight $t$ to a non-empty label set and $1$ to an empty label set. The egf is then $$\mathcal{F}_{\mathrm{WEIGHT}}(z)=t(e^z-1)+1.$$

We define a labelled structure $\mathrm{DCOL}_r$ on the label set $L$. This structure splits the set $L$ into a weak composition $(L_1,\dots, L_r)$ where $L_i$ contains the elements of color $c_i$ for $1\le i\le r$. On each block, it assigns weight $t$ if $L_i$ is non-empty and $1$ if it is empty. Therefore, in the language of labelled structures, $$\mathrm{DCOL}_r(L)=\mathrm{WEIGHT}^r(L).$$  

We will define a labelled structure $\mathrm{RDCOL}_r$ on the label set $L=\{ l_1,l_2,\dots\}$. This structure is similar to $\mathrm{DCOL}_r$ but splits the set $L\backslash\{l_1\}$ into a weak composition $(L_1,\dots, L_r)$ where $L_i$ contains the elements of color $c_i$ for $1\le i\le r$. On each block, it assigns weight $t$ if $L_i$ is non-empty and $1$ if it is empty.

We will find the egf $\mathcal{F}_{\mathrm{RDCOL}_r}$  of this labelled structure directly. Let $R_n$ be the set of ordered tuples of possibly empty sets $(B_1,\dots, B_k)$ such that the $B_i$ are pairwise disjoint and  $\uplus B_i =\{l_2,\dots,l_n\}$. For $\rho\in R_n$, let $\mathrm{ne}(\rho)$ be the number of non-empty blocks of $\rho$. We are interested in $$\sum_{n\ge 1}\sum_{\rho \in R_n}t^{\mathrm{ne}(\rho)}\frac{z^n}{n!}.$$  

This can be enumerated as follows. When $n=1$, there are no ways to color and therefore, we get $t^0z$. When $n\ge 2, 1\le j\le k$, partition $\{l_2,\dots,l_n\}$ into $j$ blocks in $S(n-1,j)$ ways. Choose the colors that these blocks have in $\binom{k}{j}$ ways and match the colors to the blocks in $j!$ ways. Thus, the egf is  $$\mathcal{F}_{\mathrm{RDCOL}_k}(z)=z + \sum_{n\ge 2} \sum_{j=1}^{k} \frac{k!}{(k-j)!}S(n-1,j)t^j\frac{z^n}{n!} $$

Interchanging the summation, we get 
\begin{equation}
  \mathcal{F}_{\mathrm{RDCOL}_k}(z)=z+\displaystyle \sum_{j=1}^{k}\binom{k}{j}j!t^j  \sum_{n=2}^{\infty}S(n-1,j) \frac{z^n}{n!}. 
\end{equation}

If we set $H_j(z):=\displaystyle \sum_{n=0}^{\infty} S(n-1,j)\frac{z^n}{n!}$, we can obtain closed forms for this series for small values of $j$ using the explicit formula $$\displaystyle S(n,k) ={\frac {k^{n}}{k!}}-\sum _{r=1}^{k-1}\frac {S(n,r)}{(k-r)!}.$$

For $j=1,2$, we have
\begin{eqnarray}
  H_1(z)&=&e^z-z-1\\
  H_2(z)&=&\frac{1}{4} (3 +2 z - 4 e^z + e^{2 z})
\end{eqnarray}

Setting $k=1$, we get 
\begin{eqnarray}
    \mathcal{F}_{\mathrm{RDCOL}_1}(z)&=&z+t(e^z-z-1)\\
    \mathcal{F}_{\mathrm{RDCOL}_2}(z)&=&z+2t(e^z-z-1)+2t^2\bigg(\frac{3 +2 z - 4 e^z + e^{2 z}}{4}\bigg)
\end{eqnarray}

We define the labelled structure $\mathrm{DESGCP}_r$ that does the following. Given the label set $L=\lbrace 1,\dots,n \rbrace$, then we form the weak composition $(L_1,L_2)$ where $L_1$ contains the elements of the first block and $L_2$ contains the rest of the elements. We want to split $L_1$ into an ordered weak composition $(L'_{1},\dots, L'_{r-1})$ where the set $L'_i$ contains the elements of color $c_i$. To each $L'_i$, we give the weight $t$ if it is non-empty and $1$ if empty. 
On $L_2$, we form a partition $B\vdash L_2$. If $B=B_1\vert B_2 \vert \dots$, then on each block $B_i$, we further break the non-minimal elements into $k$ possibly empty blocks $B_{i,1},\dots,B_{i,r}$. On non-empty blocks, we assign the weight $t$ and on empty blocks, we assign the weight $1$. In the language of labelled structures, we have $$\mathrm{DESGCP}_r=\mathrm{DCOL}_{r-1} \times \mathrm{SET}(\overline{\mathrm{RDCOL}_r}).$$

We note that $$A^{\flt}_{n+1,r}(t)=\displaystyle \sum_{\pi \in \mathrm{GCP}_r(n+1)} \sum_{B \text{ is a block of } \pi} t^{\mathrm{ne(B)}}=\mathrm{weight}(\mathrm{DESGCP}_r(\{2,\dots,n+1\})).$$ Theorem \ref{thm: main-egf}, Corollary \ref{thm:naba},Corollary \ref{thm:stirling} follow from this. 
\end{proof}

\section{Concluding remarks and questions}
Below, we list a few directions in which further study can be done.
\begin{enumerate}
    \item We borrow this table from {\cite[Table $1$]{buck2023flattened}}, which gives the number of flattened $2$-Stirling permutations based on the number of runs.
\begin{table}[ht]
\centering
\resizebox{\textwidth}{!}{
\begin{tabular}{c | c | c | ccccccccc |}
    $n$ & $|\stir_n|$ & $|\flt(\stir_n)|$ &$|\flt_{1}(\stir_n)|$ & $|\flt_{2}(\stir_n)|$ & $|\flt_{3}(\stir_n)|$ & $|\flt_{4}(\stir_n)|$ & $|\flt_{5}(\stir_n)|$ & $|\flt_{6}(\stir_n)|$ & $|\flt_{7}(\stir_n)|$ &
    \\
    \hline
    1 & 1& 1 & 1 &  &  & & \\
    2 & 3 & 2 & 1 & 1 & & & \\
    3 &15 &6 & 1 & 5 &  &  \\
    4 &105 & 24 &1 & 15 & 8 & &  \\
    5 &945 & 116 &1 & 37 & 70 & 8 & \\
    6 &10395 & 648 &1 & 83 & 374 & 190 &  \\
    7 &135135& 4088 & 1 & 177 & 1596 & 2034 & 280\\
    8 &2027025& 28640 & 1 & 367 & 6012 & 15260 & 6720 & 280 \\
    9 & 34459425& 219920 & 1 & 749 & 20994 & 93764 & 88732 & 15680\\
    10 &654729075 & 1832224 & 1 & 1515 & 69842 & 508538 & 866796 & 363132 & 22400\\
\end{tabular}
}
\caption{Counts for flattened Stirling permutations based on number of runs.}\label{tab:data}
\end{table}\\
A sequence $(u_j)_{j=1}^{n}$ is said to be unimodal if it is an index $t$ such that $u_1\le u_2\le \cdots\le u_t$ and $u_t\ge u_{t+1}\ge\cdots \ge u_n$.  The Eulerian numbers $A_{n,k}$, the number of permutations in $\SSS_n$ with $k$ descents, is known to be unimodal (See \cite{petersen-eulerian-nos-book}). Notice that the run numbers (the rows of the Table \ref{tab:data}) for $1\le n\le 10$ are unimodal. A polynomial is said to be unimodal if its coefficients form a unimodal sequence. Based on this data, one can ask if the $\flt$-Eulerian polynomials of order $2$ are unimodal for all $n\in \mathbb N$.
\item For $n=3,4,5$ and $k=3$, we have $$A^{\flt}_{3,3}(t)=1+9t+2t^2, A^{\flt}_{4,3}(t)=1+26t+36t^2, A^{\flt}_{5,3}(t)=1+63t+251t^2+90t^3.$$
For $n=3,4,5$ and $k=4$, we have $$A^{\flt}_{3,4}(t)=1+13t+6t^2, A^{\flt}_{4,4}(t)=1+37t+84t^2+6t^3, A^{\flt}_{5,4}(t)=1+89t+546t^2+372t^3.$$

The polynomials are unimodal as well. Based on this data, one can ask if the $\flt$-Eulerian polynomials of order $k$ are unimodal for all $k$.
\item One of the interesting properties of the Eulerian polynomial $A_n(t)$ is that it is real rooted (See \cite{bona-cop-book}). One can ask the same question for the $\flt$-Eulerian polynomials as well. As it turns out, for $1\le n\le 10$, the $\flt$-Eulerian polynomials of order $2$ are all real rooted. One can ask if the order $2$ $\flt$-Eulerian polynomials are all real rooted. This question can also be extended to arbitrary $k$. For $n=3,4,5$ and $k=3,4$, the $\flt$-Eulerian polynomials are all real rooted. 
\item Other statistics over the $r$-multipermutations have been studied. Two such are the $\mathrm{plateau}$ and $\mathrm{ascent}$ statistics. For a flattened $k$-Stirling permutation $W=a_1\dots a_{nk}$ of order $n$, the number of indices $i\in [nk-1]$ such that $a_i=a_{i+1}$ (resp., $a_i<a_{i+1}$) is $\mathrm{plat}(W)$ (resp., $\mathrm{asc}(W)$). It would be interesting to calculate the joint distributions of these statistics. 
\end{enumerate}

\bibliographystyle{acm}
(Concerned with Sequences \seqnum{A007405}, \seqnum{A355164}, \seqnum{A355167})
\end{document}